\title{Recognising perfect fits}
\author{\textsc{Layne Hall}
}
\setlist[enumerate,1]{label=\arabic*.,ref=\arabic*}
\theoremstyle{plain}
\newtheorem{thm}{Theorem}[section]
\newtheorem{prop}[thm]{Proposition}
\newtheorem{lemma}[thm]{Lemma}
\newtheorem{coro}[thm]{Corollary}
\let\c@algorithm\relax % drop existing counter "algorithm"
\newaliascnt{algorithm}{thm} % When we see the algorithm counter, insert thm
\theoremstyle{definition}
\newtheorem{algo/}[thm]{Algorithm}
\newtheorem{con/}[thm]{Construction}
\newtheorem{question/}[thm]{Question}
\newtheorem{example/}[thm]{Example}
\newtheorem{rem/}[thm]{Remark}
\newtheorem{defn/}[thm]{Definition}
\newtheorem{observation/}[thm]{Remark}
\newtheorem{notation/}[thm]{Notation}
\newtheorem{convention/}[thm]{Convention}
\newtheorem{process/}[thm]{Process}
\newtheorem{procedure/}[thm]{Procedure}
\newtheorem{sketch/}[thm]{Sketch}
\newtheorem{algothm/}[thm]{Algorithm}
\newtheorem{obs/}[thm]{Observation}
\newenvironment{obs}
  {%
   \pushQED{\qed}\begin{obs/}}
  {\popQED\end{obs/}}
\newenvironment{defn}
  {%
   \pushQED{\qed}\begin{defn/}}
  {\popQED\end{defn/}}
\newenvironment{example}
  {%
   \pushQED{\qed}\begin{example/}}
  {\popQED\end{example/}}
\newenvironment{question}
  {%
   \pushQED{\qed}\begin{question/}}
  {\popQED\end{question/}}
\newenvironment{rem}
  {%
   \pushQED{\qed}\begin{rem/}}
  {\popQED\end{rem/}}
\newenvironment{con}
  {%
   \pushQED{\qed}\begin{con/}}
  {\popQED\end{con/}}
\newcommand{\sing}{\{s_i\}}
\newcommand{\sspan}{\mathrm{Span}^s}
\newcommand{\uspan}{\mathrm{Span}^u}
\newcommand{\boxes}{\mathcal{B}}
\newcommand{\cs}{\mathrm{Stone}}
\newcommand{\ints}{\mathrm{Joints}}
\newcommand{\edges}{\mathrm{Edges}}
\newcommand{\faces}{\mathrm{Faces}}
\newcommand{\tets}{\mathrm{Tets}}
\newcommand{\cusps}{\mathrm{Cusps}}
\newcommand{\floworbs}{(\varphi,\sing)}
\DeclareMathOperator{\bbR}{\mathbb{R}}
\DeclareMathOperator{\bbN}{\mathbb{N}}
\DeclareMathOperator{\bbC}{\mathbb{C}}
\DeclareMathOperator{\bbZ}{\mathbb{Z}}
\numberwithin{algorithm}{section}
\numberwithin{equation}{section}
\begin{document}
\begin{abstract}
        A pseudo-Anosov flow is said to have perfect fits if there are stable and unstable leaves that are asymptotic in the universal cover. We give an algorithm to decide, given a box decomposition of a pseudo-Anosov flow, if the flow has perfect fits. As a corollary, we obtain an algorithm to decide whether two flows without perfect fits are orbit equivalent.
\end{abstract}
\maketitle

\section{Introduction}
Given a three-manifold $M$, there are rich interactions between the topology of $M$ and the dynamics of pseudo-Anosov flows on $M$ \cite{fried-sections, thurston-surface-bundles}. Many of these relationships depend on the homotopy properties of periodic orbits \cite{fenley-annals, fenley-qg, ss-link, bfm-classi}. When there are homotopies between periodic orbits, the flow must have \emph{perfect fits}. We give an algorithm to decide if a pseudo-Anosov flow has perfect fits.

A key motivation is the correspondence between flows and veering triangulations \cite{ss-link}. Let $\varphi$ be a pseudo-Anosov flow on a closed orientable three-manifold $M$. Suppose that $\varphi$ has no perfect fits. With $\sing$ the set of singular orbits of $\varphi$, Agol-Gu\'eritaud \cite{ag-talk, gueritaud-veering} construct a \emph{veering triangulation} of the drilled manifold $M^{\circ} = M-\sing$. Both Agol-Tsang \cite{agol-tsang} and Schleimer-Segerman \cite{ss-pair} give constructions from a veering triangulation to a pseudo-Anosov flow without perfect fits. This has established a correspondence between the two theories which has since been used to explore properties of the flow and the underlying manifold \cite{parlak-taut-alex, landry-minsky-taylor, tsang-birkhoff}.

\begin{figure}[ht]
    \centering
    \includegraphics[width=0.8\textwidth]{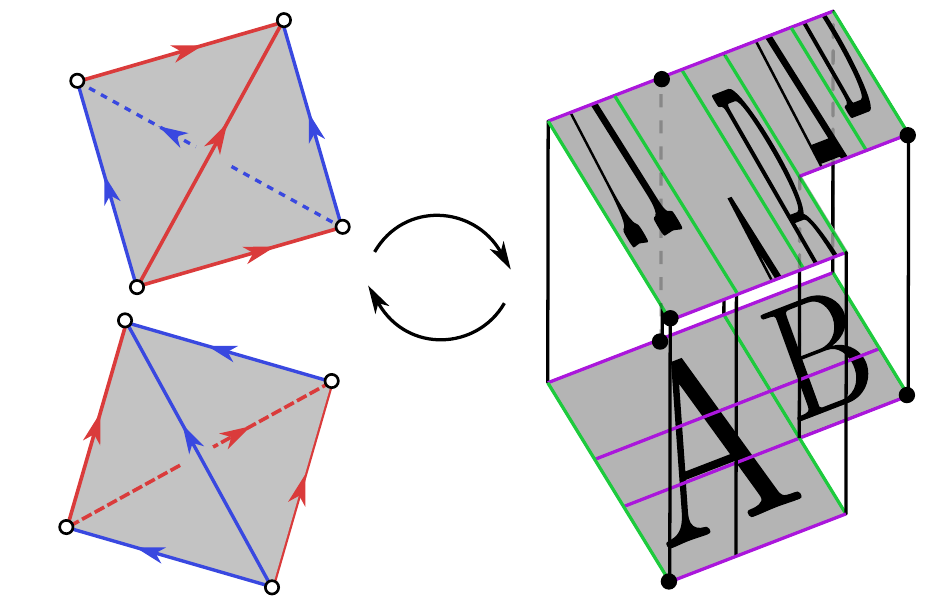}
    \caption{The main goal of the paper: to decide when we can pass between a veering triangulation (left) and a box decomposition for a pseudo-Anosov flow (right).}
    \label{fig:enter-label}
\end{figure}

An arbitrary pseudo-Anosov flow can be presented in the combinatorial form of a \emph{box decomposition} \cite[\S 3]{mosher1996laminations}. While box decompositions describe all flows, they are flexible, and can be subdivided arbitrarily. In contrast, when a veering triangulation does exist, it is a canonical invariant of the flow. This rigidity has been used to study flows computationally \cite{veering-census, Veering}.

Inspired by the rich computational theory for three-manifolds \cite{haken-normal, matveev, kuperberg}, we consider pseudo-Anosov flows from an algorithmic perspective. A natural problem is deciding when we can pass between a box decomposition and a veering triangulation. Our main result is solving this for pseudo-Anosov flows. The case of Anosov flows is fundamentally distinct from the pseudo-Anosov case, since there are not obvious orbits to drill. These are not addressed in this version of the theorem.
\begin{restatable*}{thm}{mainthm}
\label{thm:main}
    There is an algorithm that, given a box decomposition of a pseudo-Anosov flow, determines whether or not the flow has perfect fits.
\end{restatable*}
Anosov flows are instead covered in a generalised version of the main theorem, (\Cref{thm:main-ext}), which determines whether drilling specified orbits will produce a veering triangulation.

Given a decomposition for a three-manifold into flow boxes, deciding if this represents a pseudo-Anosov flow is straightforward (\Cref{rem:box-metric}), so \Cref{thm:main} addresses the main difficulty of the problem. Moreover, transitive pseudo-Anosov flows can be described by other discrete data, such as a veering triangulation with appropriate filling slopes. This can be converted to a box decomposition (\cite{agol-tsang, ss-pair}), to which we can apply the theorem.

The algorithm of \Cref{thm:main}, \texttt{HasPerfectFits}, consists of two routines that verify either side of the decision. We briefly sketch each side of the algorithm. The routine to verify that $\varphi$ has perfect fits, \texttt{FindFit}, is an application of two established results: a characterisation of perfect fits in terms of freely homotopic orbits due to Fenley \cite{fenley-qg}, and the solution to the conjugacy problem for closed three-manifold groups due to Sela \cite[Theorem 4.2]{sela} and Pr\'eaux \cite[Main Theorem]{preaux}. We enumerate periodic orbits using classical symbolic dynamics, then use the conjugacy problem to test Fenley's criterion.

For the routine in the other direction, \texttt{FindVeering}, we directly construct the veering triangulation $\mathcal{V}$ that canonically corresponds to the flow. This is done by iteratively building the universal cover of $M^{\circ}$ with boxes from $\mathcal{B}$ and applying the Agol-Gu\'eritaud construction. The routine terminates when we find a fundamental domain of the $\pi_1(M^{\circ})$-action in the universal cover $\tilde{\mathcal{V}}$ of the veering triangulation. So, the routine also returns the corresponding veering triangulation. Building this triangulation directly from a box decomposition takes the bulk of the paper.

With the resulting veering triangulation, we make progress towards the natural recognition problem for flows. Recall that two pseudo-Anosov flows are \emph{orbit equivalent} if there is a homeomorphism between the underlying manifolds that carries orbits to orbits. For a class of pseudo-Anosov flows, we say \emph{the orbit equivalence problem is solvable} if, given box decompositions of two flows in the class, there exists an algorithm to decide if the flows are orbit equivalent.

In the absence of perfect fits, the veering triangulation and filling data are enough to characterise the flow \cite{ss-final-step}.

\begin{restatable*}{coro}{equiv}\label{cor:equivalence}
The orbit equivalence problem for pseudo-Anosov flows without perfect fits is solvable.
\end{restatable*}

We can also use the veering triangulation to recognise if the underlying flow is a pseudo-Anosov suspension; this is \Cref{cor:suspension-recognition}.

\textbf{Outline.}
In \Cref{sec:prelims}, we give background on flows (\Cref{subsec:flows}), perfect fits (\Cref{subsec:pfits,subsec:homotopies}), and veering triangulations (\Cref{subsec:veering}). The routine for verifying that a flow has perfect fits (\Cref{rou:find_fit}) is given in \Cref{sec:find_fit}. In \Cref{sec:find_veering}, we detail the routine to verify that a flow does not have perfect fits (\Cref{rou:find_veering}). This includes a review of the Agol-Gu\'eritaud construction (\Cref{con:ag}) and a characterisation of the absence of perfect fits by veering triangulations (\Cref{lem:no-fits-iff-complete}). The main algorithm (\Cref{alg:has_fits}) of \Cref{thm:main} and that of \Cref{cor:equivalence} are presented in \Cref{sec:final-algos}. We conclude with a list of questions in \Cref{sec:qns}.\\

\textbf{Acknowledgements.} The author thanks Saul Schleimer for many interesting discussions about veering triangulations and pseudo-Anosov flows and for feedback on the paper. We thank Chi Cheuk Tsang for many helpful comments on an early version of the paper. Finally, we are very grateful to the anonymous reviewer; their detailed comments and corrections have greatly improved the paper.

The author is supported by the Warwick Mathematics Institute Centre for Doctoral Training, and gratefully acknowledges funding from the University of Warwick and the UK Engineering and Physical Sciences Research Council (Grant number: EP/W524645/1).

\section{Preliminaries}\label{sec:prelims}
\subsection{Flows and box decompositions}\label{subsec:flows}

        Pseudo-Anosov flows are described with two local models. The first model is a flow box; this models the flow at generic points. The second model is a pseudo-hyperbolic orbit; this models the flow at a set of exceptional orbits. We follow Mosher \cite[\S 3.1]{mosher1996laminations} in our definitions.
        
        \begin{defn}\label{def:flow box}
        A \emph{model flow box} $B$ is a labelled copy of the unit cube $I^s \times I^u \times I^t$, with each $I^* \simeq [0,1]$. We equip $B$ with a flow in the $t$-direction, and a pair of two-dimensional transverse foliations: a foliation $B^s$ whose leaves are rectangles with constant $u$-coordinate, and $B^u$ with constant $s$-coordinate. See \Cref{fig:flow box}.

    \begin{figure}
     \centering

     \begin{subfigure}[b]{0.4\textwidth}
         \centering
         \includegraphics[]{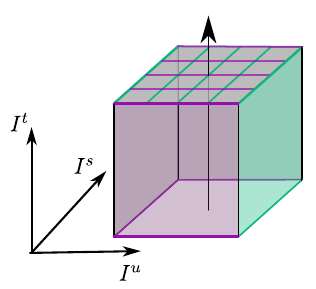}
        \caption{A flow box}
        \label{fig:flow box}
     \end{subfigure}
     \begin{subfigure}[b]{0.4\textwidth}
         \centering
         \includegraphics[scale=0.8]{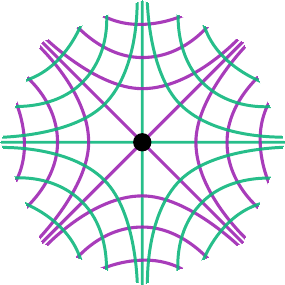}
            \caption{A singular orbit.
            }
            \label{fig:prong}
     \end{subfigure}
     
        \caption{}
        \label{fig:models}
\end{figure}
        Let $\mathrm{Top}(B) = I^s \times I^u \times \{1\}$ and $\mathrm{Bot}(B) = I^s \times I^u \times \{0\}$. Define a \emph{$u$-rectangle} of $\mathrm{Top}(B)$ to be a subset $H^s\times I^u  \times \{1\}$ for some interval $H^s\subset I^s$. By exchanging the first two coordinates, we define similarly a \emph{$s$-rectangle} of $\mathrm{Top}(B)$. Analogously, define both $u$-rectangles and $s$-rectangles of $\mathrm{Bot}(B)$.

        The \emph{vertical one-cells} of $B$ are the edges of $B$ tangent to the $t$ direction. A \emph{stable wall} is a side of a box of $B$ of the form $I^s\times \{u\} \times I^t$ for $u\in\{0,1\}$ minus the vertical one-cells.
        \end{defn}

        \begin{defn}\label{def:singular-orbit}
             For a hyperbolic matrix $A\in SL_2(\bbZ)$, the linear map $A:\bbR^2\to\bbR^2$ preserves a pair of foliations by lines on $\bbR^2$. Consider $\bbR^2$ as a copy of $\bbC$. For $k \geq 2$, let $f_k:\bbC\to\bbC$ be a lift of $A$ under the semi-branched cover $z\to z^{k/2}$ which fixes each component in the preimage of each quadrant of $\bbC$. Define $f_{k,n}$ as $f_k$ followed by a rotation by $2\pi n/k$ for $0\leq n < k$. This map preserves the preimage of the horizontal and vertical foliations.
            
            Suspend $f_{n,k}$ to obtain a flow $\psi_{n,k}$ on the resulting mapping torus of $\bbC$. The map $\psi_{n,k}$ preserves the pair of two-dimensional foliations corresponding to the preimage of the invariant foliations of $A$. We denote the foliation whose leaves are contracted as $F^s$ and the other as $F^u$. See \Cref{fig:prong}.
            
            Let $N_{k,n}$ be a small tubular neighbourhood of the periodic orbit of $\psi_{n,k}$ at the origin. Call $N_{k,n}$ a \emph{prong neighbourhood}.
        \end{defn}

        \begin{defn}
        \label{def:pa-flow}
        Let $M$ be a closed, oriented three-manifold. Let $\varphi$ be a continuous flow on $M$ with no stationary orbits. We call $\varphi$ a \emph{pseudo-Anosov flow} if there exists a pair of two-dimensional singular foliations denoted $M^s$ (stable) and $M^u$ (unstable) for which the following hold:
        \begin{enumerate}[itemsep=6pt]
            
            \item \label{def:item:non-empty-sing} There exists a \textbf{non-empty} set $\sing\subset M$ of \emph{singular} periodic orbits of $\varphi$ such that on $M-\{s_i\}$, the leaves of $M^s$ and $M^u$ are regular and transverse to each other.

            \item \label{def:item:singular} For each orbit $s_i\subset M$, there is a neighbourhood $X^i\subset M$ and a $n>2$ such that: there is an embedding into a prong neighbourhood $X^i\hookrightarrow N_{k,n}$ that preserves orbits and maps leaves of $M^s \cap X^i$ and $M^u \cap X^i$ into leaves of $F^s \cap N_{k,n}$ and $F^u \cap N_{k,n}$, respectively. Call such an orbit a \emph{$k$-prong} with \emph{rotation period} $k/\gcd(k,n)$.
            
            \item The pair $(\varphi, M)$ admits a \emph{flow box decomposition}: A cell decomposition $\mathcal{B}$ of $M$ into a collection $\{B_i\}$ of flow boxes, each equipped with an embedding $f_i:B \hookrightarrow M$ such that $f_i(B) = B_i$, satisfying:
            \begin{enumerate}[itemsep=5pt]
                \item Each orbit of $\sing$ lies in the two-skeleton of $\{B_i\}$
                \item The interiors of the boxes $\{B_i \}$ are pairwise disjoint
                
                \item The flow box embedding $f_i$ maps oriented flow-lines of $B$ into oriented flow-lines of $\varphi$, and leaves of the foliations $B^s, B^u$ into leaves of $M^s, M^u$, respectively.
    
                \item\label{def:item:markov} Let $\mathrm{Top}(B_i)$ and $\mathrm{Bot}(B_i)$ be the images of $\mathrm{Top}(B)$ and $\mathrm{Bot}(B)$ under the embedding $f_i$. The boxes must satisfy the \emph{Markov property}: if $\mathrm{Top}(B_i)\cap \mathrm{Bot}(B_j)$ is non-empty and not a point, the preimage of this intersection under $f_i$ is a collection of $s$-rectangles in $\mathrm{Top}(B)$. Likewise, the preimage under $f_j$ consists of $u$-rectangles in $\mathrm{Bot}(B)$.
            \end{enumerate}
        \label{def:item:box_decomp}
        
        \item Define the \emph{transition graph} $\mathcal{M}(\mathcal{B})\subset M$ by placing a vertex on the interior of each box $B_i$, and a directed edge passing through each rectangle in $\mathrm{Top}(B_i)\cap \mathrm{Bot}(B_j)$ from the vertex in $B_i$ to that in $B_j$. We require that $\mathcal{M}(\mathcal{B})$ has no circular sources or sinks.
        \label{def:item:hyperbolic} \qedhere
        \end{enumerate}
        
        \end{defn}

Compared to the literature, \Cref{def:pa-flow} excludes the case of an \emph{Anosov flow}, where the set of singular orbits $\sing$ is allowed to be empty. This is a natural setting for the paper because singular orbits are a canonical set of `marked orbits'. Our techniques apply in the following generalised setting.

\begin{defn}\label{def:marked-flow}
    Let $\varphi$ be an Anosov or pseudo-Anosov flow on $M$. Let $\sing\subset M$ be a non-empty collection of periodic orbits which contains all singular orbits of $\varphi$. We call $(\varphi,\sing)$ a \emph{(pseudo-)Anosov flow with marked orbits} on $M$. Let $\mathcal{B}$ be a flow box decomposition for $\varphi$. Suppose that the orbits $\sing$ are contained in the two-skeleton of $\mathcal{B}$. Then $\mathcal{B}$ is a \emph{box decomposition} of $(\varphi,\sing)$.
\end{defn}

Given a pseudo-Anosov flow $\varphi$ and $\sing$ the set of singular orbits, we have the naturally associated pseudo-Anosov flow with marked orbits $\floworbs$. A box decomposition for $\varphi$ is also naturally one for $\floworbs$.

\begin{rem}\label{rem:box-metric}
    \Cref{def:item:hyperbolic} of \Cref{def:pa-flow} can be exchanged for a familiar property: the existence of a path metric on $M$ for which the flow expands and contracts the invariant foliations, see \cite[Lemma 3.1.1]{mosher1996laminations}. Moreover, suppose that we are given a cell-decomposition $\mathcal{B}$ of $M$ into flow boxes. Mosher's argument shows that \Cref{def:item:box_decomp,def:item:hyperbolic}, along with mild conditions on the gluings between the sides of boxes (to ensure that flow-lines are glued to flow-lines and that no orbits are one-prongs), give a check-list to decide if $\mathcal{B}$ is a decomposition for a pseudo-Anosov flow.
\end{rem}

\begin{rem}\label{rem:smooth-case}
Recall that a flow is \emph{transitive} if it has a dense orbit. The given definition describes a \emph{topological} or even \emph{combinatorial} pseudo-Anosov flow as opposed to a \emph{smooth} such flow. See \cite[\S 5]{agol-tsang} for a discussion of why the notions are equivalent for transitive flows.

It is conjectured (\cite[\S 3.1]{mosher1996laminations}) that the classes are equivalent for non-transitive flows. However, the notion of perfect fits can be defined for either variant of pseudo-Anosov flow, and a non-transitive always has perfect fits, see \Cref{rem:toroidality}.
\end{rem}

\subsection{Perfect fits}\label{subsec:pfits}
Let $M$ be a closed oriented three-manifold. Let $\floworbs$ be a (pseudo-)Anosov flow on $M$ with marked orbits. The universal cover $\tilde{M}$ of $M$ is homeomorphic to $\bbR^3$ (a consequence of \cite[Theorem 6.1]{gabai-oertel}). Lift the flow $\varphi$ to a flow $\tilde{\varphi}$ on $\tilde{M}$. The \emph{orbit space} is the quotient $\mathcal{O}:=\tilde{M}/\tilde{\varphi}$. The orbit space is always homeomorphic to $\bbR^2$ \cite[Proposition 4.1]{fenley-mosher-qg}. Moreover, $\mathcal{O}$ inherits a natural action of $\pi_1(M)$. The invariant foliations of $\tilde{\varphi}$ quotient to invariant singular stable and unstable foliations $\mathcal{O}^u$, $\mathcal{O}^u$ of $\mathcal{O}$.

  Drill $M$ along the set of marked orbits to obtain $M^{\circ}:= M-\{s_i\}$. We have a corresponding drilled flow $\varphi^{\circ}$ on $M^{\circ}$. Let $N$ be the universal cover of $M^{\circ}$ and lift $\varphi^{\circ}$ to a flow $\psi$ on $N$. Quotient $N$ by $\psi$ to obtain the \emph{lifted orbit space} $\mathcal{P}$. The space $\mathcal{P}$ inherits stable and unstable foliations $\mathcal{P}^s$ and $\mathcal{P}^u$, and these are preserved by the lifted $\pi_1(M^{\circ})$-action. Since $\sing$ contains all singular orbits, puncturing removes the singularities and induces a pair of regular invariant foliations on $\mathcal{P}$ and $N$, which are then orientable.

Let $\mathcal{S}\subset\mathcal{O}$ be the discrete set of all lifts of $\{s_i\}$ to the universal cover. It is also useful to consider the \emph{punctured orbit space} $\mathcal{O}^{\circ}=\mathcal{O}-\mathcal{S}$. The lifted orbit space $\mathcal{P}$ is the universal cover of $\mathcal{O}^{\circ}$.

Let $r\subset\mathcal{O}$ be an open subset of $\mathcal{O}$. Equip $r$ with the restriction of the foliations $\mathcal{O}^{s}$ and $\mathcal{O}^u$. We say that $r$ is a \emph{rectangle} if:
\begin{itemize}
    \item there exists a homeomorphism $f_r:\mathrm{Int}(r)\to (0,1)^2$ which sends each leaf of $\mathcal{O}^s$ to a vertical segment and each leaf of $\mathcal{O}^u$ to a horizontal segment.
\end{itemize}
A compact rectangle has boundary lying along two stable and two unstable leaves. See \Cref{fig:pfits}.
\begin{figure}[ht]
            \centering
            \includegraphics[width=0.7\textwidth]{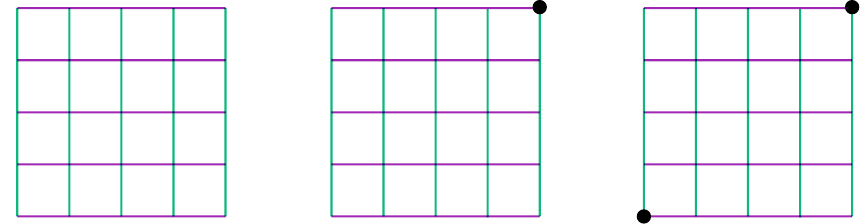}
            \caption{Left to right: a compact rectangle, a perfect fit rectangle, and a lozenge.}
            \label{fig:pfits}
\end{figure}
\begin{defn}
A rectangle $r\subset\mathcal{O}$ is a \emph{perfect fit rectangle} if $f_r$ extends to a homeomorphism $\bar{f}_r:r\to [0,1]^2- \{(1,1)\}$.
\end{defn}

The orbit space can be naturally compactified to a disk \cite[Theorem A]{fenley-cv-groups}, where a perfect fit rectangle is a rectangle with an ideal corner. In a perfect fit rectangle, there is a stable leaf $L\in M^s$ and an unstable leaf $K\in M^u$ that are coincident to the ideal corner. We say that $L$ and $K$ \emph{form a perfect fit}. A \emph{lozenge} is a rectangle $r$ such that $f_r$ can be extended to $\bar{f}_r: r\to [0,1]^2- \{(0,0),(1,1)\}$. See also \Cref{fig:pfits}. Truncating either of the ideal corners of the lozenges produces a perfect fit rectangle.

The definitions of rectangle and perfect fit rectangle generalises immediately to $\mathcal{O}^{\circ}$ and $\mathcal{P}$ in terms of their foliation pairs. A perfect fit rectangle in $\mathcal{O}^{\circ}$ arises either from a perfect fit rectangle in $\mathcal{O}$, or it can be introduced by puncturing $\mathcal{S}$. We now have the following generalised notion of perfect fits.
 \begin{defn}\label{def:rel_fits}
     Let $r\subset \mathcal{O}^{\circ}$ be a perfect fit rectangle. If $r$ includes into a compact rectangle in $\mathcal{O}$, we say that $r$ is a \emph{cusp rectangle}. Otherwise, we call $r$ a \emph{genuine perfect fit rectangle}. We extend this terminology to a perfect fit rectangle $r\subset \mathcal{P}$ according to whether its image in $\mathcal{O}^{\circ}$ is a cusp or genuine perfect fit rectangle.     
	
	 We say that \emph{$\floworbs$ has no genuine perfect fits} if every perfect fit rectangle in $\mathcal{P}$ (or $\mathcal{O}^{\circ}$) is a cusp rectangle. This is called \emph{no perfect fits relative to $\sing$} in \cite[Definition 5.13]{agol-tsang}.
 \end{defn}
 \begin{rem}\label{rem:fits_vs_genuinefits}
 Let $\varphi$ be a pseudo-Anosov flow and $\sing$ the set of singular orbits. Since singular orbits cannot appear on the interior of a lozenge, then $\varphi$ has no perfect fits if and only if the pair $\floworbs$ has no genuine perfect fits. 
 \end{rem}
 
\subsection{Homotopic periodic orbits}\label{subsec:homotopies}
Fenley characterises classical perfect fits by free homotopies. We say two periodic orbits $\alpha,\beta$ of $\varphi$ are \emph{almost freely homotopic} if $\alpha^n\simeq \beta^m$. Following \cite[\S 4]{fenley-annals}, we say that a periodic orbit $
\alpha$ is \emph{freely homotopic to itself} if there exists a non-trivial element $\eta \in \pi_1(M)$, which itself is not a multiple of $
\alpha$, that conjugates $\alpha$ to $\alpha^n$ for some $n$. Either of these phenomena implies the existence of points in $\mathcal{O}$ that are fixed by the action of a common element in $\pi_1(M)$. In \cite[Theorem 3.3]{fenley-chain} and \cite[Theorem 4.8]{fenley-good-geometry}, Fenley shows that if an element of $\pi_1(M)$ fixes distinct points in $\mathcal{O}$, there is a `chain of lozenges' connecting the points in $\mathcal{O}$. In particular, such orbits imply that $\varphi$ has perfect fits. Fenley later used the shadowing lemma to prove the converse \cite[Theorem B]{fenley-qg}.
\begin{thm}[Fenley]\label{thm:pfits-iff-homotopy}
    Let $\varphi$ be a pseudo-Anosov flow. Then $\varphi$ has perfect fits if and only if there is a pair of periodic points in $\mathcal{O}$ that are the corners of a lozenge. This is equivalent to the existence of either:
    \begin{itemize}
        \item a pair of almost freely homotopic periodic orbits of $\varphi$
        \item a periodic orbit of $\varphi$ that is freely homotopic to itself.  \qed
    \end{itemize}
    
\end{thm}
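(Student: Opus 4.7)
The plan is to establish a cycle of implications among the three conditions: (i) $\varphi$ has perfect fits, (ii) there exist two periodic points of $\mathcal{O}$ that are corners of a lozenge, and (iii) either two periodic orbits are almost freely homotopic or some periodic orbit is freely homotopic to itself. Two of the implications rely on structural results about the $\pi_1(M)$-action on $\mathcal{O}$ that are already cited in the excerpt, while the hard direction uses shadowing.

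For (iii) $\Rightarrow$ (ii), a relation $\eta \alpha^n \eta^{-1} = \beta^m$ in $\pi_1(M)$ implies that the element $\gamma = \alpha^n$ fixes two distinct points $p, q \in \mathcal{O}$, namely the projections of the axes of $\tilde\alpha$ and of $\eta \cdot \tilde\beta$. I then invoke Fenley's chain-of-lozenges theorem (\cite[Theorem 3.3]{fenley-chain}, \cite[Theorem 4.8]{fenley-good-geometry}), which says that two distinct fixed points in $\mathcal{O}$ of a single non-trivial element of $\pi_1(M)$ are joined by a finite chain of lozenges; adjacent corners along the chain supply the required periodic pair. The self-homotopy case follows the same outline, with $\eta$ itself furnishing a second fixed point distinct from the axis of $\alpha$. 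For (ii) $\Rightarrow$ (i), I truncate a lozenge along an interior stable leaf to produce a perfect fit rectangle, as already observed in the paragraph preceding the theorem.

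The bulk of the work lies in (i) $\Rightarrow$ (ii). Given a perfect fit rectangle with ideal corner $\xi$, whose sides are carried by a stable leaf $L$ and an unstable leaf $K$, the strategy is to use the pseudo-Anosov shadowing lemma to produce periodic orbits whose corners in $\mathcal{O}$ approximate $\xi$ from opposite sides of the rectangle. Concretely, I build closed pseudo-orbits that travel for a long time along $L$, jump transversally near $\xi$ (which is possible because a perfect fit exhibits no shear between $L$ and $K$), and close up along $K$; shadowing then yields genuine periodic orbits lying near the configuration. Taking periodic orbits on each side of the rectangle and passing to a subsequential limit as the pseudo-orbits approach $\xi$, I argue that the corresponding periodic points in $\mathcal{O}$ are joined by stable and unstable leaves that match up to form a lozenge, with $\xi$ as one of its ideal corners.

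The main obstacle, and the place where I expect the argument to be most delicate, is ensuring that the shadowing periodic orbits actually sit as diagonally opposite corners of a single lozenge rather than as unrelated nearby periodic orbits. Controlling this requires exploiting two things at once: the absence of shear at the perfect fit (so that the asymptotic structure is exactly preserved in the limit), and the fact that stable and unstable leaves in $\tilde M$ are properly embedded planes (so that limiting leaves cannot accidentally coincide or cross). Once these are in place, the limiting argument identifies precisely the two stable and two unstable leaves cutting out the lozenge, completing the equivalence.
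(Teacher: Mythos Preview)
The paper does not supply its own proof of this theorem: it is stated as a result of Fenley, with the \qed marking it as quoted without argument, and the surrounding paragraph simply records the provenance of each direction (the chain-of-lozenges results \cite{fenley-chain,fenley-good-geometry} for one implication, and the shadowing argument of \cite[Theorem~B]{fenley-qg} for the converse). Your outline follows exactly this attributed strategy, so there is nothing to compare against beyond noting that the paper treats the result as a black box rather than reproving it.
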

\begin{rem}\label{rem:precise-homotopies}
    We will be more precise about the free homotopies. Suppose that $\alpha$ and $\beta$ are periodic orbits (possibly the same) that form corners of a common lozenge in $\mathcal{P}$. So, we have $\alpha^n \simeq \beta^{m}$. Fenley shows in \cite[Theorem 1.2]{fenley-almost-homotopic} that when $\alpha$ and $\beta$ are regular orbits, $n$ and $m$ must have opposite signs, and we can take the powers $n$ and $m$ as having magnitude one or two. This is because the actions of the associated elements in $\pi_1(M)$ must fix the lozenge between the pair.
    
    Suppose now that $\alpha$ and $\beta$ are singular. The generalisation is that $n$ and $m$ can be taken as having magnitudes the rotation periods (\Cref{def:item:singular}, \Cref{def:pa-flow}) of the orbits $\alpha$ and $\beta$, respectively.
\end{rem}

We can obtain a similar characterisation of no genuine perfect fits for flows with marked orbits. First, we briefly recall generalised \emph{Dehn-Goodman-Fried surgery} \cite{goodman, fried-sections}.

\begin{con}[Goodman, Fried]\label{con:gf}
Fix $\gamma$ a periodic orbit of $\varphi$, and let $M^{\circ} = M-\{\gamma\}$. The stable leaf through $\gamma$ traces out a collection of disjoint curves on $\partial \overline{M^{\circ}}$. Let $s$ be a slope on $\partial \overline{M^{\circ}}$ that has algebraic intersection $k$ with these curves. For $\vert k\vert \geq 2$, Dehn filling the cusp with slope $s$ yields a pseudo-Anosov flow $\varphi(s)$ with a $\vert k\vert $-pronged singular orbit at the core of the filled torus. The periodic orbits of the new flow are the inclusion of the periodic orbits of $\varphi$ and the filled orbit. This construction is implicit in the work of Fried \cite{fried-sections}.
\end{con}

\begin{rem}\label{lem:pfits-iff-homotopy-general}
    Let $(\varphi,\sing)$ be a (pseudo-)Anosov flow with marked orbits. Let $\varphi(s)$ be a pseudo-Anosov flow with singular orbits filled into all of the marked orbits $\sing$. The two flows $\varphi$ and $\varphi(s)$ have the same lifted flow space $\mathcal{P}$, and one can observe (see \cite[Remark 3.8.]{lmt-transverse}) that $\floworbs$ has genuine perfect fits if and only if $\varphi(s)$ has perfect fits.
\end{rem}

Orbits $\alpha$ and $\beta$ are almost freely homotopic when $\alpha^n$ and $\beta^m$ are conjugate in $\pi_1(M)$. The integers $m$ and $n$ can be negative. The conjugacy problem is solvable in both the closed and with-boundary case by work of \cite[Theorem 4.2]{sela} and {Pr\`eaux} \cite[Main Theorem]{preaux}. These solutions are essential to both sides of the main algorithm.

\begin{rem}\label{rem:toroidality}
If a pseudo-Anosov flow $\varphi$ is non-transitive, the underlying manifold has an embedded transverse torus \cite[Proposition 1.1]{mosher-transitive}. Fenley shows that any pseudo-Anosov flow on a three-manifold $M$ with a $\bbZ^2$-subgroup of $\pi_1(M)$ is either an Anosov suspension or has perfect fits \cite[Main Theorem]{fenley-tori-give-fits}.
\end{rem}

One could use \Cref{rem:toroidality} to immediately decide whether a flow has perfect fits by checking if the underlying manifold is toroidal and not a torus bundle. We do not explicitly need \Cref{rem:toroidality} for our algorithm, since the perfect fits are detected regardless. However, it highlights that detecting perfect fits is only interesting when $M$ is atoroidal. Pseudo-Anosov suspensions give examples of flows without perfect fits on atoroidal manifolds \cite[Theorem G]{fenley-cv-groups}. We give some simple examples of pseudo-Anosov flows with perfect fits and singular orbits on atoroidal manifolds. Another way to construct such examples is by using the ideas presented in \cite[Comment 2, \S 4]{mosher-homology-II}. 

\begin{example}\label{eg:nontrivial}
    Let $\Sigma$ be a hyperbolic surface, and $\varphi$ the geodesic flow on the unit tangent bundle $T^1\Sigma$. Fix $\alpha_0\subset \Sigma$ an oriented simple closed geodesic. Choose $\gamma_0\subset \Sigma$ an oriented filling geodesic that intersects $\alpha_0$ in only one direction. Let $\alpha,\, \gamma\subset T^1\Sigma$ be the periodic orbits of $\varphi$ given by the canonical lifts of $\alpha_0, \gamma_0$, and $\bar{\alpha}\subset T^1\Sigma$ the lift of $\alpha_0^{-1}$. These lifts are periodic orbits of $\varphi$. Since $\gamma$ is filling, the complement $T^1\Sigma-\gamma$ is hyperbolic \cite[Appendix B]{foulon-hasselblatt}. We perform a large filling to {fill a singular orbit} into the cusp (\Cref{con:gf}) to obtain a pseudo-Anosov flow $\psi$ on a hyperbolic three-manifold $M$.
    
    In $T^1\Sigma$, the orbits $\alpha$ and $\bar{\alpha}$ bound two Birkhoff annuli, and by the choice of $\gamma_0$, one of these annuli is disjoint from $\gamma$. This annulus survives in $M$, and so the inclusions of $\alpha$ and $\bar{\alpha}$ into $M$ are freely homotopic orbits of $\psi$.
\end{example}

\subsection{Veering Triangulations}\label{subsec:veering}
Let $M$ be an oriented three-manifold with torus boundary. We define veering triangulations, first due to Agol \cite[Definition 4.1]{agol-veering}, following \cite[Definition 1.3]{hodgson-veering}.

        An \emph{ideal tetrahedron} $t$ is a tetrahedron with its vertices removed. We say that $t$ is \emph{taut} when equipped with:
        \begin{itemize}
            \item face co-orientations such that two faces are co-oriented into $t$ and two faces out of $t$
            
            \item an assignment of a dihedral angle of $0$ or $\pi$ to each edge of $t$ such that each vertex meets one $\pi$-angle edge and two $0$-angle edges. Moreover, an edge between two faces with the same co-orientation must have dihedral angle $\pi$. 
        \end{itemize}
        A \emph{veering tetrahedron} is a taut tetrahedron $t$ with a colouring of the edges in red and blue that satisfies the condition:
        \begin{itemize}
            \item For a vertex $v\in t$, let $e$ be the model edge with dihedral angle $\pi$ that meets $v$. Endow $\partial t$ an orientation by viewing $t$ from the outside of $t$. Suppose that $e, e', e''$ are the edges that meet $v$ in anti-clockwise order. Then $e'$ is coloured blue, and $e''$ is coloured red.
        \end{itemize}
        Viewing a veering tetrahedron from above, the four equatorial edges alternate colours, while the colours of the top and bottom edge are not determined, see \Cref{fig:taut-colours}.
        Now let $M$ be an oriented three-manifold with torus boundary. An \emph{ideal triangulation} (\cite[\S 4.2]{thurston-notes}) of $M$ is a collection of oriented model ideal tetrahedra $\mathcal{T}$ with orientation reversing face pairings, such that the realisation $\vert \mathcal{T}\vert $ is homeomorphic to $M$. An ideal triangulation $\mathcal{T}$ of $M$ is \emph{transverse taut} when each $t\in\mathcal{T}$ is endowed with a taut structure such that:
        \begin{itemize}
            \item two paired model faces have opposite co-orientations

            \item the sum of the dihedral angles for the collection model edges mapped to the same edge in $|\mathcal{T}|^{\circ}$ is $2\pi$.
        \end{itemize}
        
        Note that if a triangulation has edge colours, we can pull this back to an induced colouring on the edges of the individual tetrahedra.
        \begin{defn}\label{def:veering}
            Let $\mathcal{V}$ be a transverse taut ideal triangulation of an oriented three-manifold $M$ equipped with a colouring of the edges of $|\mathcal{V}|$ in red and blue. If the induced colouring on each $t\in\mathcal{V}$ makes $t$ a veering tetrahedron for all $t$, we say that $\mathcal{V}$ is a \emph{veering triangulation} of $M$.
        \end{defn}

        \begin{figure}[ht]
                \centering
                \includegraphics[]{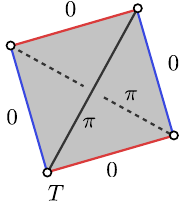}
                \caption{A taut, veering tetrahedron. Co-orientations on the faces are induced by the direction pointing out of the page.}
                \label{fig:taut-colours}
        \end{figure}
    We briefly summarise the key correspondence between veering triangulations and pseudo-Anosov flows:
    \begin{itemize}
        \item Flow to triangulation: Let $\varphi$ be a pseudo-Anosov flow. If $\varphi$ has no perfect fits, the drilled manifold $M^{\circ}$ admits a veering triangulation. This was announced by Agol-Gu\'eritaud \cite{ag-talk}; see \cite[\S 4]{landry-minsky-taylor} for an exposition. We use this explicitly in the form of \Cref{con:ag}.
        
        \item Triangulation to flow: Let $\mathcal{V}$ be a veering triangulation of a three-manifold with torus boundary $M^{\circ}$. Let $\mu = \{\mu_i\}$ be an assignment of filling slopes to each boundary component of $M^{\circ}$. Let $M^{\circ}(\mu)$ be the corresponding filled manifold. Off of finitely many lines of slopes for each $\mu_i$, there is a transitive pseudo-Anosov flow $\varphi$ on $M^{\circ}(\mu)$.
        
        Each filled torus has a periodic orbit of $\varphi$ at its core. With the filled cores included in the set of marked orbits, $\varphi$ has no perfect fits. The precise prong data of the filled orbits are determined by the combinatorics of $\mathcal{V}$ on $\partial M^{\circ}$; we defer to the actual constructions Agol-Tsang \cite[Theorem 5.1]{agol-tsang} and Schleimer-Segerman \cite[Theorem 10.1]{ss-pair} for details. The construction of Schleimer-Segerman is an inverse to that of Agol-Gu\'eritaud, which will be shown in an upcoming step of their program.
    \end{itemize}

In \Cref{subsec:agol-gueritaud}, we explain how the existence of a veering triangulation corresponding to the flow characterises the absence of perfect fits (\Cref{lem:no-fits-iff-complete}).

\section{Finding a perfect fit}\label{sec:find_fit}

Let $\mathcal{B}$ be a box decomposition for a pseudo-Anosov flow $\varphi$. In this section, we give an algorithm (\Cref{rou:find_fit}) that will be used to verify if $\varphi$ has perfect fits. We adapt this to the setting of a (pseudo-)Anosov flow with marked orbits and genuine perfect fits in \Cref{rem:anosov-case}.

\subsection{Enumerating periodic orbits}\label{subsec:orbits}
To generate homotopy classes of periodic orbits of $\varphi$, we use the classical idea (for example, \cite{BW-II, templates-book}) of encoding them up to homotopy with the transition graph $\mathcal{M}(\mathcal{B})$. We recall from \Cref{def:flow box} the notion of \emph{walls} and \emph{vertical one-cells} of the boxes in $\mathcal{B}$.

\begin{con}\label{con:itin}
Let $\gamma\subset M$ be a periodic orbit of $\varphi$. We define the \emph{itineraries} of $\gamma$. The construction has three cases:  when $\gamma$ lies inside the walls of $\mathcal{B}$, the vertical one-skeleton, or neither. We start with the case where $\gamma$ is disjoint from both the walls and the vertical one-skeleton. Choose, as a base point, any point along $\gamma$ that lies in the interior of a box in $\mathcal{B}$. We follow along $\gamma$, recording which rectangle $\gamma$ passes through when it crosses between boxes. This gives a periodic word. Let $w$ be the primitive part of this word; $w$ is unique up to cyclic permutation. We call $w$ the \emph{itinerary of $\gamma$}. Dual to the sequence of rectangles $w$ records are edges in $\mathcal{M}(\mathcal{B})$; their union is a loop $\gamma_w$.

Now suppose that $\gamma$ meets the two-skeleton. The Markov property implies that $\gamma$ is contained in the union of a circular tuple of walls $\cup W_i$ such that each $W_{i+1}$ lies immediately above $W_i$. We call each wall $W_i$ a \emph{persistent wall} and $(W_i)$ a \emph{persistent tuple}. See \Cref{fig:pers-ann}.

\begin{figure}[ht]
    \centering
    \includegraphics[]{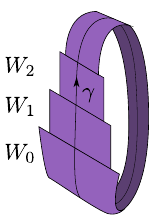}
    \caption{A persistent tuple of walls.}
    \label{fig:pers-ann}
\end{figure}

Restrict further to when $\gamma$ meets the walls of $\mathcal{B}$, but not the vertical one-skeleton. This means that $\gamma$ is non-singular, and $\gamma$ is contained in the union of a persistent tuple $(W_i)$. The intersection $x = \gamma\cap W_i$ is a one-cell. We can push $x$ into the box on either side of $W_i$ to obtain a vertical segment disjoint from the walls. The push-off moves each endpoint of $x$ from the boundary of a rectangle to its interior. We take both choices of push-off, and repeat for all the $W_i$. Since the pushed-off segments for $W_i$ and $W_{i+1}$ naturally glue top to bottom, all of the segments stitch together to give either one or two curves $\hat{\gamma}_i$. We call these the \emph{push-offs} of $\gamma$. Since each $\hat{\gamma}_i$ is disjoint from the walls, they have an itinerary $\hat{w}_i$ as above. These are the itineraries of $\gamma$.

For the remaining case, suppose that $\gamma$ lies in the vertical one-skeleton of $\mathcal{B}$. Now $\gamma$ is a union of vertical one-cells. Such a one-cell $x$ meets the boundary of multiple persistent walls. We can then push-off $x$ into any of these adjacent walls. We do so for each such wall, and repeat for all choices of $x$. The resulting vertical segments stitch together to give a collection of loops $\{\hat{\gamma}_i\}$. The precise number of these loops depends on the prong-type and rotation period of $\gamma$. Each $\hat{\gamma}_i$ is contained in the union of a persistent tuple, so we can apply the previous case to obtain a set of itineraries. Taking the union of these itineraries over all $\hat{\gamma}_i$ gives the itineraries of $\gamma$.
\end{con}

\begin{obs}\label{obs:twisted}
    Let $\gamma$ be a periodic orbit with itinerary $w$. The dual loop $\gamma_w$ is homotopic to some multiple of $\gamma$ by a homotopy that keeps the loops dual to the top and bottom rectangles of boxes. More precisely, suppose that $\gamma$ either meets the walls or the vertical one-skeleton and has a prong neighbourhood (\Cref{def:singular-orbit}) of type $N_{k,n}$. For $n\neq 0$, then for $t = k/\gcd(k,n)$, the loop $\gamma_w$ is homotopic to the multiple $\gamma^t$.
    
    Now suppose that there is a periodic orbit $\delta$ such that $\delta$ and $\gamma$ are corners of a common lozenge in $\mathcal{O}$. So, $\gamma^i\simeq \delta^{-j}$ where $i,j>0$ are the rotation periods of orbits \Cref{rem:precise-homotopies}. The preceding paragraph shows that the homotopy classes of $\gamma^i$ and $\delta^j$ are precisely the multiple encoded by their itineraries. 
\end{obs}
 
Choose an ordering on the top/bottom rectangles of the boxes. This endows itineraries with a lexical order. Let $\mathcal{I}_n(\mathcal{B})$ be the set of itineraries with length at most $n$ that are lexically maximal among their own cyclic permutations.

\begin{lemma}\label{lem:itins-not-shared}
	Two distinct periodic orbits $\alpha$ and $\beta$ of $\varphi$ have distinct itineraries in $\mathcal{I}_n(\mathcal{B})$.
\end{lemma}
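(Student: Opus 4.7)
The plan is to appeal to the classical injectivity of symbolic coding for Markov partitions: each itinerary $w \in \mathcal{I}_n(\mathcal{B})$ will determine a unique periodic orbit of $\varphi$, and so any two orbits sharing an itinerary must coincide. Fix $w = r_0 r_1 \cdots r_{k-1}$, let $B_{j_0}$ be the box containing the first letter, and let $w^\infty$ be the bi-infinite periodic extension. For each $N \geq 0$ I would define an $s$-sub-rectangle $R_N^s \subset \mathrm{Top}(B_{j_0})$ consisting of those points whose forward flow traverses the next $N$ rectangles of $w^\infty$, and a $u$-sub-rectangle $R_N^u$ of those whose backward flow traverses the previous $N$. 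The Markov property (\Cref{def:item:markov}) guarantees inductively that $R_{N+1}^s$ is an $s$-sub-rectangle of $R_N^s$ and $R_{N+1}^u$ a $u$-sub-rectangle of $R_N^u$.

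Next, I would invoke the hyperbolic path metric provided by \Cref{rem:box-metric}. Under this metric the flow contracts leaves of $M^s$ and expands leaves of $M^u$ exponentially, so the stable width of $R_N^s$ and the unstable width of $R_N^u$ both decay to zero as $N \to \infty$. Consequently
\[
\bigcap_{N \geq 0} R_N^s \cap R_N^u = \{q_w\}
\]
is a single point of $\mathrm{Top}(B_{j_0})$, and the orbit $\gamma_w$ through $q_w$ is the unique periodic orbit whose flow realises the symbolic coding $w^\infty$.

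It then remains to show that every periodic orbit with itinerary $w$ equals $\gamma_w$. If $\alpha$ is disjoint from the walls and the vertical one-skeleton, its itinerary is literally its rectangle sequence, so $\alpha$ meets $\mathrm{Top}(B_{j_0})$ at $q_w$ and hence $\alpha = \gamma_w$. If $\alpha$ lies on the two-skeleton, then $w$ is the itinerary of some push-off $\hat{\alpha}_i$. Because the push-off parameter can be shrunk freely and each push-off realises the coding $w^\infty$ by construction, its intersection point with $\mathrm{Top}(B_{j_0})$ lies in $R_N^s \cap R_N^u$ for every $N$; letting the parameter tend to zero pushes this point onto $\alpha$, and uniqueness of $q_w$ forces $\alpha$ to pass through $q_w$. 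So again $\alpha = \gamma_w$, and two orbits sharing an itinerary both equal $\gamma_w$, contradicting $\alpha \neq \beta$.

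I expect the main obstacle to be the non-generic case: arguing that the symbolic coding of a push-off is realised by $\alpha$ itself rather than by some nearby but disjoint periodic orbit. This amounts to verifying that the persistent tuple of walls containing $\alpha$ organises its adjacent boxes so that push-offs remain arbitrarily close to $\alpha$, combined with the uniqueness of the accumulation point $q_w$ produced by hyperbolic contraction; a little bookkeeping will also be needed when $\alpha$ lies on the vertical one-skeleton, where several push-offs into different walls must all be shown to code $\alpha$.
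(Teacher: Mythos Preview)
Your proposal is correct and follows essentially the same route as the paper: both arguments nest rectangles along the periodic itinerary and use the Markov property plus hyperbolic contraction to pin down the orbit. The paper's version is more economical in that it only iterates forward, obtaining a single stable segment containing both $\alpha$ and $\beta$, and then tacitly uses that a stable leaf carries at most one periodic orbit; your two-sided iteration to a single point is unnecessary but harmless. One small wrinkle: in the wall case the push-off curve is not itself a flow line, so its intersection with $\mathrm{Top}(B_{j_0})$ need not lie in $R_N^s$ as you defined it---cleaner is to observe directly that $\alpha$ lies in the \emph{closure} of every $r_i$ (since the push-off can be made arbitrarily close to $\alpha$) and run the contraction argument with closed rectangles, which is effectively what your ``let the parameter tend to zero'' step is doing.
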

\begin{proof}
	Let $w\in \mathcal{I}_n(\mathcal{B})$ be a shared itinerary of $\alpha$ and $\beta$. For $r$ a subrectangle on the bottom of box $B$, let $f(r)$ be the image of $r$ in the top of $B$ obtained by flowing vertically upwards. Let $(r_i)$ be the infinite sequence of rectangles passed through by following $w$ periodically forward. Set $s_0= r_0$ and $s_{i+1} = f(s_i)\cap r_{i+1}$. Then $\alpha$ and $\beta$ intersect $s_{i}$, while the Markov property forces $s_i$ to contract to a stable segment as $i\to\infty$. This implies $\alpha = \beta$.
\end{proof}

We want to scan $\mathcal{I}_n(\mathcal{B})$ for free homotopies between orbits of the flow. Duplicate itineraries of the same orbit give `false positives' of freely homotopic orbits, and so we need to detect and discard them. For this, we use the following.
\begin{obs}\label{obs:pers-wall}
    By construction, duplicate itineraries only occur in the cases where an orbit $\gamma$ that meets the walls or lies in the vertical one-skeleton. 
    
    Starting just with $\mathcal{B}$, if $(W_i)\subset\mathcal{B}$ is a persistent tuple, then there is a unique periodic orbit contained in the union of the $W_i$. The itineraries of this orbit are those of its push-offs as in \Cref{con:itin}.
    
    For $(W_i),(X_j)$ two persistent tuples, define a relation by setting $(W_i)\sim (X_j)$ if the orbits covered by $(W_i)$ and $(X_j)$ share an itinerary. The \emph{push-off equivalence} is the transitive closure of $\sim$. Because of \Cref{lem:itins-not-shared}, this identifies all duplicates: two itineraries in $\mathcal{I}_n(\mathcal{B})$ represent the same orbit if and only if they appear as itineraries for equivalent persistent tuples.
\end{obs}

There is a simple algorithm to compute the set of persistent walls of $\mathcal{B}$ in linear time. By following the persistent walls upward, we can arrange them into persistent tuples, and then sort them further into equivalence classes under the push-off relation.

Let $S$ be an equivalence class of persistent tuples of walls. Each tuple in $S$ has two itineraries; we take the union over $S$ of all these itineraries and discard all but one from $\mathcal{I}_n(\mathcal{B})$. Call the resulting set $\overline{\mathcal{I}}_n(\mathcal{B})$.
\begin{lemma}\label{lem:orbit-code}
    Let $\gamma\subset M$ be a periodic orbit of $\varphi$ that has an itinerary of length at most $n$. Then there is a unique itinerary of $\gamma$ in $\overline{\mathcal{I}}_n(\mathcal{B})$. \qed
\end{lemma}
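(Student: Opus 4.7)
The plan is to prove existence and uniqueness of the itinerary in $\overline{\mathcal{I}}_n(\mathcal{B})$ separately, leveraging \Cref{con:itin}, \Cref{lem:itins-not-shared}, and \Cref{obs:pers-wall}.

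For existence, I would note that \Cref{con:itin} produces at least one itinerary $w$ for $\gamma$; by cyclically rotating to its lexically maximal representative we obtain $w\in\mathcal{I}_n(\mathcal{B})$. The passage to $\overline{\mathcal{I}}_n(\mathcal{B})$ only discards all but one representative from each push-off equivalence class, so at least one itinerary of $\gamma$ survives.

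For uniqueness, I would split on the position of $\gamma$ relative to the two-skeleton of $\mathcal{B}$. If $\gamma$ is disjoint from the walls and from the vertical one-cells, then \Cref{con:itin} associates to $\gamma$ a single periodic word up to cyclic rotation, giving a unique element of $\mathcal{I}_n(\mathcal{B})$ that survives to $\overline{\mathcal{I}}_n(\mathcal{B})$. Otherwise $\gamma$ lies in the walls or the vertical one-skeleton, and its itineraries are obtained as itineraries of push-offs from persistent tuples. Suppose $w_1,w_2\in \overline{\mathcal{I}}_n(\mathcal{B})$ are both itineraries of $\gamma$, realised via push-offs from persistent tuples $(W_i)$ and $(X_j)$ respectively. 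By \Cref{lem:itins-not-shared}, the unique orbit covered by $(W_i)$ must be $\gamma$ itself, since otherwise two distinct orbits would share the itinerary $w_1$; the same argument applies to $(X_j)$. Hence the two tuples cover $\gamma$, their push-off itineraries coincide (they are all itineraries of $\gamma$), and so $(W_i)\sim(X_j)$ by the defining relation of \Cref{obs:pers-wall}. Thus $w_1$ and $w_2$ lie in a common push-off equivalence class, and by the construction of $\overline{\mathcal{I}}_n(\mathcal{B})$ exactly one representative of each class is retained, forcing $w_1=w_2$.

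The only step requiring real care is the case in which $\gamma$ lies in the two-skeleton, where duplication of itineraries is a genuine phenomenon. However, \Cref{obs:pers-wall} is tailored precisely to this issue, so uniqueness reduces to a short bookkeeping argument on push-off equivalence classes once \Cref{lem:itins-not-shared} is in hand.
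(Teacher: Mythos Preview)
Your proposal is correct and matches the paper's approach: the paper gives no explicit proof (the statement carries a \qed), treating the lemma as immediate from the construction of $\overline{\mathcal{I}}_n(\mathcal{B})$ together with \Cref{lem:itins-not-shared} and \Cref{obs:pers-wall}, and your argument is the natural unpacking of that. One small simplification: rather than re-deriving that $(W_i)\sim(X_j)$ via \Cref{lem:itins-not-shared}, you can appeal directly to the final sentence of \Cref{obs:pers-wall}, which already asserts that two itineraries represent the same orbit if and only if they arise from equivalent persistent tuples.
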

    
\subsection{Finding a free homotopy}\label{subsec:findfit}
Let $\mathcal{B}^*$ denote the cell decomposition of $M$ that is dual to $\mathcal{B}$. Let $G$ be a presentation of $\pi_1(M)$ computed from the cell structure of $\mathcal{B}^*$. An element of $\overline{\mathcal{I}}_n$ gives a word in $G$. We use $\texttt{AreConjugate}(G,(w,v))$ to denote the solution to the conjugacy problem (\cite[Theorem 4.2]{sela}, Pr\'eaux \cite[Main Theorem]{preaux}) applied to the presentation $G$ and words $(w,v)$. We apply this solution to scan for pairs of itineraries in $\overline{\mathcal{I}}_n$ whose encoded orbits are the corners of a common lozenge in $\mathcal{O}$. This gives $\mathtt{FindFit}(n,\mathcal{B})$, the following simple algorithm.

\begin{algorithm}[ht]
		 \caption{ $\mathtt{FindFit}(n,\mathcal{B})$}
    \label{rou:find_fit}
		\begin{algorithmic}[1]
			\ForAll{$w,v \in \overline{\mathcal{I}}_n$}
                \If{\texttt{AreConjugate}$(G,(w,v^{-1}))$ = \texttt{True}}
                \State \Return $\mathtt{True}$
                \EndIf
                \EndFor
                \State \Return $\mathtt{False}$
		\end{algorithmic}
    \end{algorithm}

\begin{prop}\label{lem:find_fits_works}
    The $\mathtt{FindFit}(n,\mathcal{B})$ routine returns $\mathtt{True}$ for some $n>0$ if and only if $\varphi$ has perfect fits.
\end{prop}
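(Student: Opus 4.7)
The plan is to prove both directions by translating the algorithm's conjugacy test into Fenley's characterisation \Cref{thm:pfits-iff-homotopy}, using the dictionary supplied by \Cref{obs:twisted}: the itinerary $w\in\overline{\mathcal{I}}_n$ of a periodic orbit $\gamma$ represents, as an element of the presentation $G$, exactly the rotation-period power $\gamma^{t}$. Combined with \Cref{rem:precise-homotopies}, which asserts that Fenley's almost-free-homotopies can be taken with precisely these rotation-period powers, the check $\texttt{AreConjugate}(G,(w,v^{-1}))$ is literally a test for the relation $\alpha^{t_\alpha}\simeq \beta^{-t_\beta}$ among the orbits $\alpha,\beta$ coded by $w,v$.

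For the forward direction, I would suppose $\mathtt{FindFit}(n,\mathcal{B})$ returns $\mathtt{True}$ with witnesses $w,v\in\overline{\mathcal{I}}_n$. Correctness of \texttt{AreConjugate}, together with \Cref{lem:orbit-code} and \Cref{obs:twisted}, then gives $\alpha^{t_\alpha}\simeq \beta^{-t_\beta}$ in $\pi_1(M)$. If $w\ne v$, then \Cref{lem:orbit-code} forces $\alpha\ne\beta$, so the orbits are almost freely homotopic and \Cref{thm:pfits-iff-homotopy} produces perfect fits. If $w=v$, then $\alpha=\beta$ and some $g\in\pi_1(M)$ satisfies $g\alpha^{t}g^{-1}=\alpha^{-t}$; since $\pi_1(M)$ is torsion-free ($\tilde M\cong\bbR^3$), taking $t$th roots yields $g\alpha g^{-1}=\alpha^{-1}$, and $g$ cannot be a power of $\alpha$ (else $\alpha^{2}=1$), so $\alpha$ is freely homotopic to itself and \Cref{thm:pfits-iff-homotopy} again applies.

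For the backward direction, I would assume $\varphi$ has perfect fits. Then \Cref{thm:pfits-iff-homotopy} and \Cref{rem:precise-homotopies} produce orbits $\alpha,\beta$ (possibly equal) forming the corners of a lozenge with $\alpha^{i}\simeq \beta^{-j}$ for $i,j$ the rotation periods. Choosing $n$ large enough that the unique itineraries $w$ of $\alpha$ and $v$ of $\beta$ (\Cref{lem:orbit-code}) both lie in $\overline{\mathcal{I}}_n$, \Cref{obs:twisted} says $w,v$ represent exactly $\alpha^{i},\beta^{j}$ in $G$, so $w\sim v^{-1}$ and $\mathtt{FindFit}$ returns $\mathtt{True}$ on this $n$.

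The main subtlety I expect is the self-homotopy case $w=v$: matching the conjugacy $\alpha^t\sim\alpha^{-t}$ to Fenley's second criterion requires the torsion-freeness of $\pi_1(M)$ and the check that the conjugator cannot itself be a power of $\alpha$. Aside from this, the argument is a clean chain of matching definitions through the cited results, with the crucial alignment being the coincidence between the power $t=k/\gcd(k,n)$ produced by \Cref{obs:twisted} and the rotation period exponents appearing in \Cref{rem:precise-homotopies}.
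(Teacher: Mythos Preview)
Your argument follows the paper's approach: both directions run through Fenley's characterisation \Cref{thm:pfits-iff-homotopy}, with \Cref{obs:twisted} and \Cref{rem:precise-homotopies} aligning the rotation-period powers encoded by itineraries with the exponents in the lozenge-corner relation. The paper's own proof is terser and does not explicitly split off the $w=v$ case; your added detail there is welcome, but it introduces a gap.

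From $g\alpha^{t}g^{-1}=\alpha^{-t}$ you deduce $g\alpha g^{-1}=\alpha^{-1}$ by ``taking $t$th roots'', citing only torsion-freeness of $\pi_1(M)$. Torsion-freeness does not give unique roots: the Klein bottle group $\langle x,y\mid x^2=y^2\rangle$ is torsion-free with $x\ne y$, and such subgroups can sit inside orientable three-manifold groups. A cleaner route avoids roots altogether. The element $\alpha^t$ fixes the point $p\in\mathcal{O}$ lying over the orbit $\alpha$; since $g\alpha^t g^{-1}=\alpha^{-t}$, the element $\alpha^{-t}$ (and hence $\alpha^t$) also fixes $g\cdot p$. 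If $g\cdot p=p$ then $g$ lies in the cyclic stabiliser of $p$ and so commutes with $\alpha$, forcing $\alpha^{2t}=1$, a contradiction. Hence $\alpha^t$ fixes two distinct points of $\mathcal{O}$, and the chain-of-lozenges result invoked just before \Cref{thm:pfits-iff-homotopy} yields perfect fits directly, without ever needing to match the literal form of the ``freely homotopic to itself'' clause.
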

\begin{proof}
    We use Fenley's characterisation, \Cref{thm:pfits-iff-homotopy}. If $\varphi$ has perfect fits, there is a pair of (possibly the same orbit)  that arises as the corners of a common lozenge in $\mathcal{O}$. The precise multiples that can occur (\Cref{rem:precise-homotopies}) are precisely those encoded by the itineraries by \Cref{obs:twisted}. A pair or single orbit arises in $\overline{\mathcal{I}}_n(\mathcal{B})$ for some $n$ by \Cref{lem:orbit-code}, at which point $\mathtt{FindFit}(n,\mathcal{B})$ returns $\mathtt{True}$. Since the itineraries in $\overline{\mathcal{I}}_n(\mathcal{B})$ are unique, the converse follows similarly.
\end{proof}
Note that since we are only looking for common corners of a lozenge, we may miss pairs of freely homotopic orbits that arise for smaller $n$.

\begin{rem}\label{rem:anosov-case}
We adapt \Cref{rou:find_fit} to the case where we are instead given a box decomposition of a (pseudo-)Anosov flow with marked orbits $(\varphi,\sing)$. Perform Dehn filling on the drilled manifold $M^{\circ}$ with slopes $s$ to obtain a flow $\varphi(s)$ that has singular orbits at each of the filled cusps (\Cref{con:gf}). We compute these slopes in terms of the boundary cell-structure induced by $\mathcal{B}^{\circ}$.

The periodic orbits of $\varphi(s)$ are precisely the images of the undrilled periodic orbits of $\varphi$ along with the cores of the filled solid tori. We then scan these orbits for conjugacies to see if $\varphi(s)$ has perfect fits. This is equivalent to $(\varphi,\sing)$ having no genuine perfect fits  by \Cref{lem:pfits-iff-homotopy-general}.
\end{rem}

\section{Finding no perfect fits}\label{sec:find_veering}
 Let $(\varphi,\sing)$ be a (pseudo-)Anosov flow with marked orbits (\Cref{def:marked-flow}). Let $\mathcal{B}$ be a box decomposition of $(\varphi,\sing)$. In this section, we give a routine to verify that $(\varphi,\sing)$ has no genuine perfect fits, \Cref{rou:find_fit}. The example to keep in mind is $\varphi$ a pseudo-Anosov flow and $\sing$ the set of singular orbits.

\subsection{The Agol-Gu\'eritaud construction}\label{subsec:agol-gueritaud}
We recall the conventions used in the Agol-Gu\'eritaud construction \cite{ag-talk}, following the expositions \cite[\S 4]{landry-minsky-taylor} and \cite[\S 2-5]{ss-loom}.
\begin{figure}[ht]
    \centering
    \includegraphics[width=0.6\textwidth]{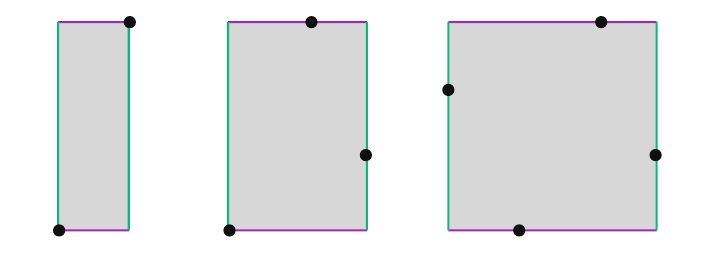}
    \caption{From left to right: An edge, face, tetrahedron rectangle.}
    \label{fig:skeletal_rects}
\end{figure}

\begin{defn}\label{defn:skel_rects}
    A \emph{cusp leaf} of $\mathcal{P}$ is a lift of a punctured leaf in $\mathcal{O}^{\circ}$. Let $r\subset \mathcal{P}$ be a rectangle with corresponding homeomorphism $f_r:r\to (0,1)^2$. Suppose that the boundary of $r$ is a union of cusp leaves. If:
    \begin{itemize}
        \item the homeomorphism $f_r$ extends to a homeomorphism
        \[ \bar{f}_r:r\to [0,1]^2- \{(0,0),(1,1)\}, \]
        we say that $r$ is an \emph{edge rectangle}.
        \item for some $x,y\in (0,1)$, the homeomorphism $f_r$ extends to a homeomorphism
        \[ \bar{f}_r:r\to [0,1]^2- \{(0,0),(x,1),(1,y)\}, \]
        we say that $r$ is a \emph{face rectangle}.
        \item for some $n,e,s,w \in (0,1)$ the homeomorphism $f_r$ extends to a homeomorphism
        \[ \bar{f}_r:r\to [0,1]^2- \{(0,w),(1,e),(s,0),(n,1)\}, \]
        we say that $r$ is a \emph{tetrahedron rectangle}.
    \end{itemize}
    See \Cref{fig:skeletal_rects}. In each case, we call $r$ a \emph{skeletal rectangle}.
\end{defn}
We emphasise the condition in \Cref{defn:skel_rects} that the boundary leaves are cusp leaves. This distinguishes an edge rectangle from the lift of a lozenge in $\mathcal{O}$ that is disjoint from $\sing$, since the two have the same foliation structure.

    Let $r\subset\mathcal{P}$ be a face or tetrahedron rectangle. Let $s\subset r$ be a subrectangle. If every unstable (stable) leaf that meets $r$ also meets $s$, we say that $s$ \emph{stable (unstable) spans} $r$. See \Cref{fig:span-n-orient}.

    By subdividing rectangles, we see:
    \begin{itemize}
        \item Each face rectangle contains three edge subrectangles.
        \item Each tetrahedron rectangles contains four face subrectangles and six edge subrectangles.
    \end{itemize}
   
    Given a face or tetrahedron rectangle $r\subset\mathcal{P}$, there is a unique edge subrectangle that stable spans and another which unstable spans. We denote these rectangles as $\sspan(r)$ and $\uspan(r)$, respectively.

We fix orientations on the transverse foliations $\mathcal{P}^s$ and $\mathcal{P}^u$, which we can do since $\mathcal{P}$ is simply connected. Following \cite[Remark 2.5]{ss-loom}, we consider the south-north direction as pointing forward in the stable direction, and the west-east as forward in the unstable direction See \Cref{fig:span-n-orient}. We refer to the north, south, east, and west sides of a skeletal rectangle, and the corners of a rectangle by their intercardinal positions. In terms of these conventions, an edge rectangle has a cusp at either its north-west or north-east corner, in which case we assign the \emph{veer}, denoted $\mathrm{Veer}(e)$, as blue or red respectively. See \Cref{fig:span-n-orient}. The veer is preserved under the $\pi_1(M^{\circ})$-action.
 
\begin{figure}[ht]
    \centering
    \includegraphics[scale=0.9]{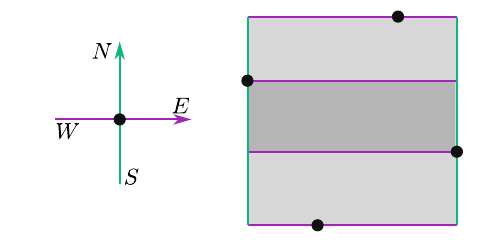}
    \caption{The shaded edge rectangle spans the tetrahedron rectangle in the unstable direction. The shaded edge rectangle is blue.}
    \label{fig:span-n-orient}
\end{figure}

Now we describe a finite form of the Agol-Gu\'eritaud construction.
\begin{con}[Agol-Gu\'eritaud]\label{con:ag}
Let $\mathcal{R}$ be a finite collection of tetrahedron rectangles in $\mathcal{P}$. Replace $\mathcal{R}$ by a maximal subset that contains no pair of tetrahedron rectangles that are $\pi_1(M^{\circ})$-translates.

To each rectangle $r\in \mathcal{R}$, assign an ideal tetrahedron $t(r)$. Label each vertex of $t(r)$ by a unique cusp of $r$. Let $e_i$ be the edges of $t(r)$ for $i\in\{1,\dots,6\}$. If $e_i$ has vertices labelled by cusps $\alpha,\beta$, we associate to $e_i$ the edge subrectangle $e_i\subset r$ that has corners $\alpha$ and $\beta$. Similarly, the faces $f_i$ correspond to a unique face rectangle $f_i\subset r$ for $i$. Endow $t(r)$ with the following veering structure:
\begin{itemize}
        \item Angles: If $e_i$ spans $r$ in either direction, assign $e_i$ a dihedral angle of $\pi$. Otherwise, assign $e_i$ a dihedral angle of $0$.

        \item Co-orientations: If $\sspan(f_i) = \sspan(r)$, assign $f_i$ an outward co-orientation, otherwise (where we must have $\uspan(f) = \uspan(r)$) assign an inward co-orientation.

        \item Colours: Give the edge $e_i$ the colour $\mathrm{Veer}(e_i)$.

    \end{itemize}
    The tetrahedron $t(r)$ can be depicted with each edge, face, and tetrahedron lying in the interior of its associated skeletal rectangle. See \Cref{fig:ag-tet}.
    
    \begin{figure}[ht]
             \centering
             \includegraphics[scale=0.75]{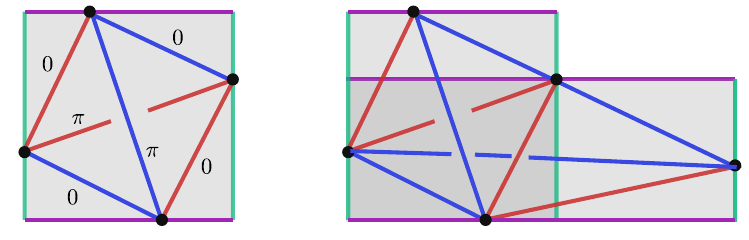}
             \caption{Left: a veering tetrahedron and its rectangle. Right: an induced face pairing between two veering tetrahedra.}
             \label{fig:ag-tet}
         \end{figure}
         
    Now we assign face pairings between the tetrahedra $\{t(r)\}_{r\in\mathcal{R}}$. Let $f, f'$ be a pair of face rectangles that are subrectangles of rectangles $r,\,r'\in 
   \mathcal{R}$. If $f$ and $f'$ are $\pi_1(M^{\circ})$-translates, assign a pairing between the corresponding faces of $F$ and $F'$. This pairing corresponds to extending $f$ into $r$ one way and into (some translate of) $r'$ the other, see the right of \Cref{fig:ag-tet}.
    
    Let $\mathcal{V}(\mathcal{R})$ be the tetrahedra $\{t(r)\}_{r\in\mathcal{R}}$ with the declared face pairings.
\end{con}

We emphasise that \Cref{con:ag} is stated so that it can be applied to a finite collection $\mathcal{R}$ regardless of whether $(\varphi,\sing)$ has genuine perfect fits.

\subsection{Characterising no perfect fits}
We use \Cref{con:ag} to characterise the absence of perfect fits.

\begin{defn}\label{def:complete}
    Let $\mathcal{R}$ be a finite collection of tetrahedron rectangles in $\mathcal{P}$. Call $\mathcal{R}$ \emph{complete} if $\mathcal{V}(\mathcal{R})$ is a veering triangulation.
\end{defn}

The purpose of this subsection is to establish the following.
\begin{prop}\label{lem:no-fits-iff-complete}
    The flow with marked orbits $(\varphi,\sing)$ has no genuine perfect fits if and only if $\mathcal{P}$ admits a complete collection of tetrahedron rectangles.
\end{prop}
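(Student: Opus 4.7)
The plan is to prove both directions by using the Agol-Gu\'eritaud construction (\Cref{con:ag}) as a dictionary between finite collections of tetrahedron rectangles in $\mathcal{P}$ and veering triangulations of $M^{\circ}$.

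For the forward direction, suppose $(\varphi,\sing)$ has no genuine perfect fits. The cited Agol-Gu\'eritaud result \cite{ag-talk, landry-minsky-taylor} already produces a veering triangulation $\mathcal{V}$ of $M^{\circ}$, whose tetrahedra are in natural bijection with $\pi_1(M^{\circ})$-orbits of tetrahedron rectangles in $\mathcal{P}$. I would select one tetrahedron rectangle from each orbit to obtain a finite collection $\mathcal{R}$, and then check that the angle, co-orientation, colour, and face-pairing recipes of \Cref{con:ag} applied to $\mathcal{R}$ recover $\mathcal{V}$ tetrahedron-by-tetrahedron. This is essentially bookkeeping: \Cref{con:ag} was designed so that the local data at each tetrahedron rectangle agrees with the Agol-Gu\'eritaud assignment, so $\mathcal{V}(\mathcal{R}) = \mathcal{V}$ and $\mathcal{R}$ is complete.

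For the backward direction, I would argue the contrapositive. Suppose $\mathcal{R}$ is complete, so $\mathcal{V}(\mathcal{R})$ is a veering triangulation. Because the face pairings in \Cref{con:ag} are defined using $\pi_1(M^{\circ})$-translations of tetrahedron rectangles, the universal cover of $\mathcal{V}(\mathcal{R})$ carries a developing map into $\mathcal{P}$ whose image consists of the $\pi_1(M^{\circ})$-translates of the rectangles in $\mathcal{R}$. The next step would be to argue that this developing image is all of $\mathcal{P}$: $\mathcal{V}(\mathcal{R})$ being a closed ideal triangulation forces the local angle sums and face pairings to close up consistently, and cocompactness of the $\pi_1(M^{\circ})$-action then promotes local tiling to a global tiling of $\mathcal{P}$ by tetrahedron rectangles whose corners are all cusps of $\mathcal{P}$. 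Once $\mathcal{P}$ is tiled in this way, any perfect fit rectangle $r\subset\mathcal{P}$ must have its ideal corner at a cusp, i.e.\ $r$ is a cusp rectangle in the sense of \Cref{def:rel_fits}; so $(\varphi,\sing)$ has no genuine perfect fits.

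The main obstacle is the surjectivity step in the backward direction: upgrading the combinatorial completeness of $\mathcal{V}(\mathcal{R})$ as an ideal triangulation to the statement that its developing map actually tiles $\mathcal{P}$ by tetrahedron rectangles. A technical subtlety is to ensure that $\mathcal{V}(\mathcal{R})$ triangulates $M^{\circ}$ rather than some cover or quotient; this should follow from tracking how the specific $\pi_1(M^{\circ})$-elements used in the face pairings generate a cocompact lattice, but requires careful verification. Once this tiling is established, the conclusion that no ideal corner of any rectangle can be a non-cusp point is immediate from \Cref{defn:skel_rects}, since by definition every corner of a tetrahedron rectangle lies at a cusp of $\mathcal{P}$.
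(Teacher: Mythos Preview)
Your forward direction matches the paper's and is fine.

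The backward direction has two genuine gaps. First, the appeal to ``cocompactness of the $\pi_1(M^{\circ})$-action'' to promote a local tiling to a global one does not work: the action of $\pi_1(M^{\circ})$ on $\mathcal{P}$ is \emph{not} cocompact, since the quotient is the punctured orbit space $\mathcal{O}^{\circ}$, which is open. The paper establishes that the translates $\overline{\mathcal{R}}$ cover $\mathcal{P}$ by a different mechanism: it builds \emph{layered sequences} of tetrahedron rectangles along half-leaves and shows these sequences cannot get stuck, because getting stuck would force infinitely many tetrahedra to share an edge of $\mathcal{V}(\mathcal{R})$, contradicting that $\mathcal{V}(\mathcal{R})$ is a finite triangulation.

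Second, even granting that $\overline{\mathcal{R}}$ covers $\mathcal{P}$, your final step does not follow. You claim that once $\mathcal{P}$ is tiled by tetrahedron rectangles with cusp corners, any perfect fit rectangle $r$ must have its ideal corner at a cusp. But the ideal corner of $r$ is not a point of $\mathcal{P}$ at all; it records the asymptotic behaviour of a stable and an unstable leaf that never meet. Nothing in \Cref{defn:skel_rects} forces such an asymptotic pair to come from a puncture just because $\mathcal{P}$ happens to be covered by rectangles with cusp corners. The paper handles this with a second layered-sequence argument: assuming a genuine perfect fit rectangle $Q$, it runs a layered sequence along the stable leaf forming the fit and shows that the absence of cusps inside $Q$ forces the sequence to extend in only one direction forever while retaining a fixed edge subrectangle, again producing an infinite-degree edge in $\mathcal{V}(\mathcal{R})$. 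This is the substantive content of the backward direction, and it is missing from your sketch.
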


The key property of a complete collection $\mathcal{R}$ is that all faces of $\mathcal{V}(\mathcal{R})$ are paired. However, it simplifies the exposition to just ask for the entire triangulation. Note that the property of $\mathcal{V}(\mathcal{R})$ being a veering triangulation is checkable.

\Cref{lem:no-fits-iff-complete} is essentially known in the literature, though in different forms. Consider first the forward direction. If $\floworbs$ has no perfect fits, then $\mathcal{P}$ is \emph{loom} in the sense of \cite[Definition 2.11]{ss-loom}. Then we apply the original Agol-Gu\'eritaud construction following \cite[\S 5]{ss-loom} to obtain a locally finite veering triangulation. Since tetrahedron rectangles are finite up to the $\pi_1(M)$-action by \cite[Lemma 4.5]{landry-minsky-taylor}, there is a finite complete collection $\mathcal{R}$. The fact that $\mathcal{P}$ is a loom space for a flow without genuine perfect fits is (implicitly) asserted in \cite[\S 4]{landry-minsky-taylor}, and a proof will appear in future work of Schleimer-Segerman's program (promised in \cite[Example 2.19]{ss-loom}).

So, we establish the backward direction of \Cref{lem:no-fits-iff-complete}. We first prove a sequence of lemmas about tetrahedron rectangles.

Let $\mathcal{R}$ be a complete collection of tetrahedron rectangles. Let \[\overline{\mathcal{R}} = \{g\cdot R: g\in\pi_1(M^{\circ})\}\] be the collection of all translates of the rectangles in $\mathcal{R}$.  We first show that $\overline{\mathcal{R}}$ covers $\mathcal{P}$. Since every edge rectangle spans two tetrahedron rectangles up to translation, every face rectangle in $\overline{\mathcal{R}}$ is contained in two tetrahedron rectangles.

We call a sequence of tetrahedron rectangles $(r_n)_{i\in\bbN}\subset \overline{\mathcal{R}}$ \emph{layered} if:
\begin{itemize}
    \item consecutive rectangles $r_{i}$ and $r_{i+1}$ share a face subrectangle $f_i$
    \item the face $f_i$ spans $r_i$ in a common (stable/unstable)  direction for all $i$.
\end{itemize}
 See \Cref{fig:layered-sequence}. A layered sequence corresponds to layering the tetrahedron associated to $r_{i+1}$ in $\mathcal{V}(\mathcal{R})$ over the tetrahedron associated to $r_i$ in the universal cover. The spanning condition enforces that the layering is always up or always down.
\begin{figure}[ht]
    \centering
    \includegraphics{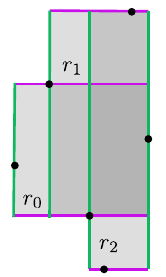}
    \caption{Rectangles in a stable layered sequence.}
    \label{fig:layered-sequence}
\end{figure}

\begin{lemma}\label{lem:bad-layered}
     Let $(r_i)\subset \overline{\mathcal{R}}$ be an infinite, layered sequence of tetrahedron rectangles. Then the rectangles $r_i$ cannot all share a common edge rectangle.
\end{lemma}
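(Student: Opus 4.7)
The plan is to combine monotonicity of layering with the fact that only finitely many tetrahedra of the lifted triangulation can meet any given edge. Up to reversing the sequence I may assume the common spanning direction is stable, so each $f_i$ stable-spans $r_i$. By the coorientation rule of \Cref{con:ag} (namely $\sspan(f_i)=\sspan(r_i)$ gives outward coorientation), each $f_i$ is outward from $r_i$ and inward into $r_{i+1}$, so every step of the layered sequence crosses a face in the positive transverse direction of the taut triangulation $\mathcal{V}(\mathcal{R})$.

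Because $\mathcal{R}$ is complete, $\mathcal{V}(\mathcal{R})$ is a genuine transverse taut ideal triangulation of $M^{\circ}$, and its lift $\widetilde{\mathcal{V}}$ to the universal cover $\widetilde{M^{\circ}}$ is transverse taut as well. The face coorientations assemble into a 1-cocycle on the dual 1-skeleton: the cocycle condition at each dual 2-cell translates exactly to the taut angle condition that the dihedral angles around the corresponding edge sum to $2\pi$. Since $\widetilde{M^{\circ}}$ is simply connected, this cocycle is a coboundary, producing a $\mathbb{Z}$-valued level function $\ell$ on tetrahedra with $\ell(r_{i+1})=\ell(r_i)+1$ along our sequence. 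In particular the $r_i$ are pairwise distinct. On the other hand, $\mathcal{V}(\mathcal{R})$ has finitely many tetrahedra, so $\widetilde{\mathcal{V}}$ is locally finite, and only finitely many tetrahedron rectangles of $\overline{\mathcal{R}}$ can contain the putative common edge rectangle $e$ as an edge subrectangle. An infinite sequence of distinct rectangles drawn from a finite set is impossible, giving the contradiction.

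The principal obstacle is verifying the cocycle condition cleanly so that the level function $\ell$ really exists; once it is in place, monotonicity of the layered sequence and the final contradiction follow almost immediately from the local finiteness of $\widetilde{\mathcal{V}}$ and the completeness of $\mathcal{R}$.
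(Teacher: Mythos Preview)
Your proof is correct and reaches the same contradiction as the paper: a common edge rectangle forces infinitely many tetrahedra around a single edge of the lifted veering triangulation, which is impossible since edges have finite degree. The paper's three-sentence argument simply asserts that ``the corresponding edge has infinite degree on the universal cover''; your level-function construction from the transverse-taut coorientation cocycle supplies the justification, left implicit in the paper, that the $r_i$ are pairwise distinct (one can also read this off directly from the strict monotonicity of the stable and unstable extents of the $r_i$ along a layered sequence).
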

\begin{proof}
Suppose for contradiction that the sequence $(r_i)$ contains a common edge rectangle. The tetrahedra corresponding to the rectangles $r_i$ all share a common edge, as in \Cref{fig:bad-layered}. So, the corresponding edge has infinite degree on the universal cover. This contradicts that $\mathcal{V}(\mathcal{R})$ is a veering triangulation.
\end{proof}

\begin{con}\label{con:leaf-cover}
    Let $l\subset \mathcal{P}$ be a half of (that is, a ray contained in) a leaf that meets $r\in \overline{\mathcal{R}}$. We cover $l$ with a layered sequence of tetrahedron rectangles $(r_n)$, as follows.
    
    Breaking symmetry, suppose that $l$ is half of a stable leaf extending southward. The finite end of $l$ is contained in some $r_0\in\overline{\mathcal{R}}$. There are two vertically spanning face subrectangles of $r_0$. At least one of these face rectangles $f_0\subset r_0$ contains $l\cap r_0$, with all three possible configurations shown in \Cref{fig:leaf-spots}.
\begin{figure}[ht]
    \centering
    \includegraphics{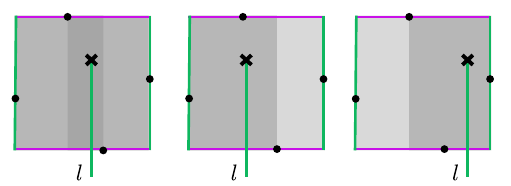}
    \caption{The shaded faces are those which can be extended along to give a larger neighbourhood of $l$.}
    \label{fig:leaf-spots}
\end{figure}

If there is a choice for $f_0$, then one of the choices can be extended over its south edge to a tetrahedron rectangle. Take this as $f_0$. With $f_0$ chosen, let $r_1$ be the other tetrahedron rectangle (not $r_0$) that $f_0$ spans. We again have a choice of unstable-spanning face rectangle $f_1$ of $r_1$, and proceed inductively, always preferring the face rectangle $f_i$ which extends southward when given a choice. This gives a layered sequence of tetrahedron rectangles $(r_i)$. Note that the rectangles $r_i$ are strictly decreasing in the unstable direction and increasing in the stable direction.

If $l$ is the end of a cusp leaf which meets a cusp, then the sequence $(r_i)$ can terminate with a rectangle that has this cusp on an edge. Otherwise, the sequence is infinite.
\end{con}

\begin{lemma}\label{lem:layered-covers}
    There is a layered sequence $(r_n)$ that covers $l$.
\end{lemma}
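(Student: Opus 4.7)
The plan is to argue by contradiction. If the sequence $(r_i)$ produced by \Cref{con:leaf-cover} terminates, it does so at a rectangle containing the cusp at the end of $l$, so $l\subset\bigcup_i r_i$ immediately. Otherwise $(r_i)$ is infinite, and I suppose for contradiction that $\bigcup_i r_i$ does not cover $l$.

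Writing $l_i = l\cap r_i$, the layering condition together with the ``prefer south'' rule of \Cref{con:leaf-cover} gives an increasing family of stable sub-segments whose union is, by hypothesis, a proper sub-ray of $l$. Let $p\in l$ be its south endpoint; since the sequence did not terminate, $p$ is not a cusp. Let $L_p$ be the unstable leaf through $p$. The south boundaries $\partial_S r_i$ form a weakly descending sequence of unstable leaves bounded below (in the stable direction) by $L_p$, so they cannot escape southward.

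The heart of the argument is a stabilisation-then-contradiction step. First, passing to a tail of the sequence, $\partial_S r_i$ is constant equal to a single unstable leaf $L$: the alternative is an infinite sequence of distinct unstable leaves accumulating in the bounded stable range between $\partial_S r_0$ and $L_p$, which is ruled out by local finiteness of $\overline{\mathcal{R}}$ in $\mathcal{P}$ (the $\pi_1(M^\circ)$-action is properly discontinuous and $\mathcal{R}$ is finite, so only finitely many tetrahedron rectangles of $\overline{\mathcal{R}}$ meet any given compact region). Second, with $\partial_S r_i = L$ for large $i$, the south side of $r_i$ is a sub-segment of $L$ that shrinks to the point $L\cap l = p$; since $r_i$ is a tetrahedron rectangle, this south side must contain a cusp $\sigma_i$. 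Cusps on $L$ are discrete (again from local finiteness of $\overline{\mathcal{R}}$), so for large $i$ the shrinking south side contains no cusp, a contradiction. Alternatively one routes the conclusion through \Cref{lem:bad-layered}: eventually $\sigma_i$ is a fixed cusp $\sigma$, and an edge subrectangle having $\sigma$ as its south corner is common to infinitely many $r_i$, yielding the same contradiction.

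The main obstacle is justifying the local finiteness claims: that the $\pi_1(M^\circ)$-action on $\mathcal{P}$ is properly discontinuous, and that this translates into discreteness of cusps on a fixed leaf and into non-accumulation of the south-boundary leaves in bounded stable regions. These should follow from $\mathcal{R}$ being finite together with the standard fact that deck transformations of a universal cover act properly discontinuously, but some care is needed in $\mathcal{P}$, which is only the universal cover of the possibly non-Hausdorff orbit space $\mathcal{O}^\circ$, and where ``compact region'' must be interpreted in terms of the rectangle combinatorics rather than a naive topology.
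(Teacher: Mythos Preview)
Your argument has a genuine gap: the claim that the $\pi_1(M^{\circ})$-action on $\mathcal{P}$ is properly discontinuous, and hence that $\overline{\mathcal{R}}$ is locally finite, is false. Any periodic orbit of the drilled flow gives an element $g\in\pi_1(M^{\circ})$ fixing a point of $\mathcal{P}$, and then every power $g^{n}$ fixes that same point; so the action is not properly discontinuous. (Deck transformations do act properly discontinuously on $N$, but this does not descend to the flow quotient $\mathcal{P}=N/\psi$.) In fact your own sequence already witnesses the failure of local finiteness: since $l\cap r_i\subset f_i\subset r_{i+1}$, every $r_i$ contains the finite endpoint of $l$, while the $r_i$ are pairwise distinct because they strictly decrease in the unstable direction. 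So neither the stabilisation of $\partial_S r_i$ nor the discreteness of the south cusps $\sigma_i$ can be obtained this way, and the obstacle you flag at the end is fatal rather than merely technical.

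The paper bypasses all of this by reading the stabilisation directly off the combinatorics of \Cref{con:leaf-cover}. The ``prefer south'' rule means that a step fails to extend over the south edge only when $l$ meets $r_i$ in the rightmost configuration of \Cref{fig:leaf-spots}; if the sequence does not cover $l$, then its tail consists entirely of such steps. In that configuration the two cusps in the south-east of $r_i$ are retained in $r_{i+1}$, so the entire tail shares a fixed south-east edge subrectangle, and \Cref{lem:bad-layered} gives the contradiction. Your alternative route via \Cref{lem:bad-layered} is exactly this endgame, but you should reach the common edge rectangle through the configuration analysis rather than through the unavailable local finiteness.
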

\begin{proof}
    Breaking symmetry, take $l$ to be the south half of a stable leaf. Since the sequence $(r_n)$ decreases in the unstable direction, it suffices to show that there is a subsequence of $(r_n)$ that is obtained by extending only over the south edges of the preceding rectangle. If this were not the case, the tail of $(r_n)$ is obtained by only extending northward. This occurs only when $l$ meets each rectangle of the tail as in the rightmost of \Cref{fig:leaf-spots}. This implies that the rectangles must share a common edge subrectangle $e\subset\mathcal{P}$ in their south-east corners.
    
    If the layered sequence terminates after finitely many rectangles, then $l$ must be the side of cusp leaf meeting a cusp. In such a case, the rectangles then cover $l$.
    
    So, suppose that $l$ does not end in a cusp. Then the sequence of rectangles $(r_n)$ does not terminate. This contradicts \Cref{lem:bad-layered}.
\end{proof}
\begin{figure}[ht]
    \centering
    \includegraphics[]{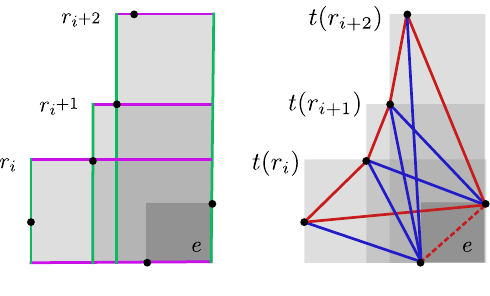}
    \caption{A layered sequence (left) which corresponds to layering tetrahedra of $\mathcal{V}(\mathcal{R})$ (right) around a fixed edge $e$ (dotted).}
    \label{fig:bad-layered}
\end{figure}
\begin{lemma}\label{lem:complete-implies-covers}
    Let $\mathcal{R}$ be a complete collection of tetrahedron rectangles. Then $\overline{\mathcal{R}}$ covers $\mathcal{P}$.
\end{lemma}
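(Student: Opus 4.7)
My plan is to show that the set $Z := \bigcup_{r \in \overline{\mathcal{R}}} r$ is both open and closed in $\mathcal{P}$. Since $\mathcal{P}$ is connected and $\mathcal{R}$ is non-empty (hence so is $Z$), this will give $Z = \mathcal{P}$, which is the content of the lemma. Openness is immediate because each rectangle is an open subset of $\mathcal{P}$, so the work is in proving closedness. My strategy for closedness is to first establish that $Z$ is saturated by both the stable and unstable foliations on $\mathcal{P}$, then deduce the same for its complement, and finally derive a contradiction from the existence of a boundary point via a local bi-foliated chart.

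The main step, which I expect to be the only real obstacle, is the saturation property: if $q \in Z$ then both $L^s_q \subset Z$ and $L^u_q \subset Z$. Given $q$ in the interior of some $r \in \overline{\mathcal{R}}$, each of the two stable half-leaves of $L^s_q$ emanating from $q$ has its finite end in $r$, and hence is covered by a layered sequence of tetrahedron rectangles in $\overline{\mathcal{R}}$ by \Cref{lem:layered-covers}. Unioning the two half-covers yields $L^s_q \subset Z$, and the unstable case is symmetric. This is the step where the completeness hypothesis on $\mathcal{R}$ is consumed, via the appearance of \Cref{lem:bad-layered} in the proof of \Cref{lem:layered-covers}.

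With saturation of $Z$ in hand, I would note that $\mathcal{P} \setminus Z$ is also saturated by both foliations by a one-line contradiction: if $p \notin Z$ yet some $q \in L^s_p \cap Z$, then the saturation of $Z$ gives $L^s_p = L^s_q \subset Z$, forcing $p \in Z$; the unstable case is identical. To finish, I suppose for contradiction that there is a point $p \in \partial Z$, and take a small open neighbourhood $V$ of $p$ which is a bi-foliated chart, i.e.\ homeomorphic to $(-1,1)^2$ with $\mathcal{P}^s$ and $\mathcal{P}^u$ corresponding to the vertical and horizontal line foliations. Such a $V$ exists because $\mathcal{P}^s$ and $\mathcal{P}^u$ are regular, orientable, transverse foliations on the surface $\mathcal{P}$. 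Inside $V$, both $V \cap Z$ and $V \cap (\mathcal{P} \setminus Z)$ are open and saturated by the two coordinate foliations of $(-1,1)^2$, and any such subset is either empty or all of $V$. Since $p$ is a boundary point, both of these sets are non-empty, giving a contradiction. Therefore $\partial Z = \emptyset$, so $Z$ is clopen in $\mathcal{P}$ and equal to $\mathcal{P}$.
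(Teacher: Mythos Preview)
Your argument is correct and follows the same route as the paper, which gives only a two-line proof: ``By \Cref{lem:layered-covers}, $\overline{\mathcal{R}}$ covers every leaf of $\mathcal{P}$. Since every point lies on a leaf, $\overline{\mathcal{R}}$ covers $\mathcal{P}$.'' Your version spells out the bi-saturation and connectedness step that the paper leaves implicit. One small slip: $V \cap (\mathcal{P} \setminus Z)$ is closed in $V$, not open, but your chart argument only uses that this set is saturated by both coordinate foliations (which forces it to be empty or all of $V$), so the conclusion is unaffected.
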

\begin{proof}
By \Cref{lem:layered-covers}, $\overline{\mathcal{R}}$ covers every leaf of $\mathcal{P}$. Since every point lies on a leaf, $\overline{\mathcal{R}}$ covers $\mathcal{P}$.
\end{proof}

\begin{lemma}\label{lem:complete-implies-no-pfits}
    Suppose that $\mathcal{P}$ admits a complete collection of tetrahedron rectangles $\mathcal{R}$. Then $(\varphi,\sing)$ has no genuine perfect fits.
\end{lemma}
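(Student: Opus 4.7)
The plan is to argue by contradiction. Assume $(\varphi,\sing)$ admits a genuine perfect fit. By \Cref{def:rel_fits} this yields a perfect fit rectangle $r \subset \mathcal{P}$ that is not a cusp rectangle; after relabelling, take its ideal corner at the NE, with $L$ the stable leaf on the east side and $K$ the unstable leaf on the north side. In particular $L \cap K = \emptyset$, and neither $L$ nor $K$ ends in a cusp at the ideal corner.

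Pick $p \in L \cap r$. I would first invoke \Cref{lem:layered-covers} to cover the north ray of $L$ from $p$ with an infinite stable-spanning layered sequence $(r_n)_{n \geq 0}$ of tetrahedron rectangles in $\overline{\mathcal{R}}$; the sequence is infinite because the ray has no cusp endpoint. The key observation is then that $K$ cannot enter the interior of any $r_n$: in the rectangle coordinates on $r_n$, the leaf $L$ is a vertical interior segment at some $x_L \in (0,1)$, so if $K$ were also to enter $r_n^\circ$ it would be a horizontal interior segment at some $y_K \in (0,1)$, forcing $L \cap K$ to contain the interior point $(x_L, y_K)$. Since the four cusps of a tetrahedron rectangle all lie on its boundary, this intersection would be a bona fide point of $\mathcal{P}$, contradicting $L \cap K = \emptyset$.

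To derive the contradiction I would apply \Cref{lem:bad-layered}. A careful reading of \Cref{con:leaf-cover} shows that in a north-extending layered sequence each layering step leaves the south boundary of the tetrahedron rectangle fixed, so every $r_n$ has south boundary at the same stable height $a_0$ (that of $r_0$'s south boundary). Since the unstable extents $[x_{W,n}, x_{E,n}]$ of the $r_n$ are nested and contained in the bounded interval $[x_{W,0}, x_{E,0}]$, all south cusps of the $r_n$ lie in a bounded segment of $\mathcal{P}$. Discreteness of cusps then forces only finitely many possible values of the south cusp, and by pigeonhole some infinite subsequence $(r_{n_k})$ shares a common south cusp $c$. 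Applying the analogous pigeonhole to a second cusp (e.g.\ the west cusp, confined to a bounded region by $K$ on the north and the shrinking unstable extent on the east) extracts an infinite sub-subsequence in which the $r_{n_k}$ also share a common west cusp $w$. Consequently these rectangles share the edge rectangle with ideal corners $c$ and $w$, contradicting \Cref{lem:bad-layered}.

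I expect the main obstacle to be justifying the confinement of the secondary sequence of cusps to a region of $\mathcal{P}$ where discreteness forces stabilisation---in particular, ruling out that these cusps drift toward the ideal NE corner. This should rely on combining the observation that $K$ bounds the stable extent of the $r_n$ from above with the shrinkage of the unstable extent, ensuring that the cusps stay in a compact subset of $\mathcal{P}$ to which the discreteness of cusps genuinely applies.
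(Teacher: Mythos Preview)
The central gap is the assertion that ``each layering step leaves the south boundary of the tetrahedron rectangle fixed.'' This is false. In \Cref{con:leaf-cover} (adapted to the northward ray), at each step the shared face $f_i$ stable-spans $r_i$ but only unstable-spans $r_{i+1}$, so $r_{i+1}$ extends $f_i$ over either its north edge \emph{or} its south edge; both genuinely occur, and the proof of \Cref{lem:layered-covers} is precisely an analysis of what happens when the tail extends only in the ``wrong'' direction. Thus the south boundary of $r_n$ is merely monotone (non-increasing), and nothing prevents it from marching off southward. Once this fails, your pigeonhole on the south cusps collapses: the south cusps lie on a possibly unbounded family of distinct unstable cusp leaves, and there is no discreteness to invoke. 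The west-cusp pigeonhole has the same defect, since the west edges are distinct stable leaves with no a priori confinement from below.

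The paper's proof confronts exactly this issue. It first runs the layered sequence until the rectangle is narrow enough that its west side lies inside the perfect fit rectangle $Q$; then the absence of cusps in $Q$ forces the north-west quarter of $r_N$ (cut by $l$ and $k$) to be cusp-free. After one or two further face extensions the paper obtains a tetrahedron rectangle $s_0$ whose north-west \emph{and} south-east quarters are both cusp-free. This arrangement of cusps is what forces every subsequent step of the new layered sequence $(s_n)$ to extend over the north edge only. Now the south boundary really is fixed, and in fact all the $s_n$ share a common south-west edge subrectangle, contradicting \Cref{lem:bad-layered} directly---no pigeonhole or discreteness argument needed. Your observation that $K$ cannot enter the interior of any $r_n$ is correct and is implicitly what bounds the rectangles from above; what you are missing is the geometric adjustment that pins down the south side.
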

\begin{proof}
    Suppose for contradiction that $(\varphi,\sing)$ has perfect fits. Then there is a genuine perfect fit rectangle $Q\subset\mathcal{P}$ (\Cref{def:rel_fits}).  Breaking symmetry, assume that the perfect fit occurs at the north-east corner of $Q$. Let $p\in Q$ be the south-east corner of $Q$. Since $\overline{\mathcal{R}}$ covers $\mathcal{P}$ (\Cref{lem:complete-implies-covers}), there exists a rectangle $r_0\in \overline{\mathcal{R}}$ with $p\in r_0$.

    Let $l\in\mathcal{P}^s$ be the stable leaf through $p$; this leaf forms the perfect fit in $Q$. Using \Cref{con:leaf-cover}, we build a layered sequence of rectangles $(r_n(l))$ that covers a half of $l$ forming the perfect fit, using $r_0(l) = r_0$. This sequence does not terminate, since the half of $l$ we are covering does not contain a cusp. Let $k\in\mathcal{P}^u$ be the unstable leaf through $p$, on the south side of $Q$. The leaves $l$ and $k$ cut $r_n$ into \emph{quarters}, and we refer to these quarters using the intercardinal directions.
    
    Since the sequence $r_i$ decreases in the unstable direction, there is $N$ such that the western edge of $r_N$ meets the interior of $Q$. Since $Q$ is a perfect fit rectangle, there can be no cusps where $r_N$ meets $Q$. So, there are no cusps on the north-west quarter of $r_N$, as in the left of \Cref{fig:tet-loz-sequence}.
    
    Since $(r_i)$ covers $l$, there is $N$ such that $r_{N+1}$ is obtained by extending over the north edge of $r_N$. If there are no cusps in the south-east quarter of $r_N$, choose $s_0=r_N$. If there are cusps in the south-east quarter of $r_N$, we can perform one or two face extensions to $r_N$ (\Cref{fig:tet-loz-sequence}, left) to obtain a rectangle $s_0\in\overline{\mathcal{R}}$ such that $s_0$ meets $l$ and $s_0$ does not have cusps in its south-east quarter. Now we build a new layered sequence starting from $s_0$ that covers the same end of $l$ as $r_n$. With this new sequence $(s_n)$, we must always extend over the north edge (see \Cref{fig:tet-loz-sequence}, right). This gives a layered sequence that meets a fixed common south-west edge rectangle, which contradicts \Cref{lem:bad-layered}.
\end{proof}
\begin{figure}[ht]
    \centering
    \includegraphics{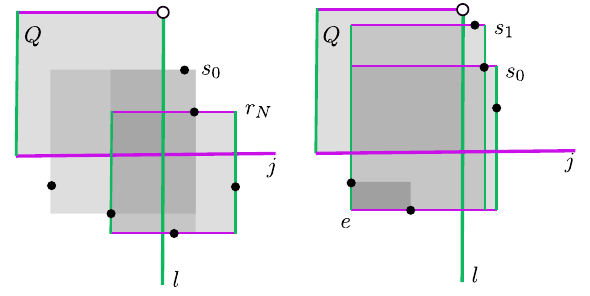}
    \caption{Sequences of tetrahedron rectangles intersecting a lozenge as in the proof of \Cref{lem:complete-implies-no-pfits}}
    \label{fig:tet-loz-sequence}
\end{figure}

\Cref{lem:complete-implies-no-pfits} completes the proof of \Cref{lem:no-fits-iff-complete}. \qed

\begin{rem}
The pseudo-Anosov flows constructed in the program of Schleimer-Segerman, which are built as an inverse to the original Agol-Gu\'eritaud construction, do not have perfect fits. This is a consequence of \cite[Theorem 8.1]{ss-link} and \cite[Lemma 9.2]{ss-link}. \Cref{lem:complete-implies-no-pfits} is an analogue of this fact for the finite version, \Cref{con:ag}.
\end{rem}

The main routine of the section, $\mathtt{FindVeering}$, will apply \Cref{con:ag} and check the criterion \Cref{lem:no-fits-iff-complete}. The remaining work is to build, from $\mathcal{B}$, the required skeletal rectangles. This comprises the remainder of the section.

\subsection{Boxes in the universal cover}
We now use $\mathcal{B}_M$ to refer to the box decomposition of the flow $\varphi$ with marked orbits $\sing$. Let $\mathcal{B}_{M}^{\circ}$ be the resulting (drilled) box decomposition of $M^{\circ}$. Lift $\mathcal{B}_{M}^{\circ}$ to a box decomposition $\mathcal{B}_{N}$ of the universal cover $N$ of $M^{\circ}$. We work primarily with $\mathcal{B}_{N}$ for the remainder of this section and so denote it by $\mathcal{B}$.
     
  We introduce some nomenclature for working with $\mathcal{B}$. Declare two cusp rectangles in $\mathcal{P}$ as equivalent if their projections to $\mathcal{O}$ share a corner in $\sing$. A \emph{cusp} of $\mathcal{P}$ is an equivalence class of cusp rectangles (\cite[Definition 3.3]{ss-loom}). Cusps in $\mathcal{P}$ lift naturally to \emph{cusps} of the lifted flow space $N$. For $\alpha$ a cusp of $N$, we have the \emph{stable/unstable cusp leaves} of $N$ as the lifts of the cusp leaves in $\mathcal{P}$. We use $N^s(\alpha)$ and $N^u(\alpha)$ to denote the set of all stable/unstable cusp leaves through $\alpha$. We use $N^s$ and $N^u$ to denote set of stable and unstable leaves of $N$, respectively.
  
  In the other direction, we call the quotient of some subset $X\subset N$ in $\mathcal{P}$ the \emph{shadow} of $X$.

       If $B\in\mathcal{B}$, we use $P(B)\in \mathcal{B}_M$ to denote the box that we drill and lift to obtain $B$. If $P(B)$ meets $\sing$, then $B$ meets cusps of $N$, and we say that (some of) the cusp leaves of such a cusp \emph{emanate into $B$}. When this cusp leaf contains a wall of $B$, we say it emanates into this wall, and otherwise that it emanates into the interior of $B$.
        
     Call a gluing between $A$ and $B$ \emph{vertical} if it occurs between the tops and bottoms of boxes, otherwise \emph{horizontal}.
     
    We define a partial order on $\mathcal{B}$. For $A, B\in\mathcal{B}$, declare $A\leq B$ if there is a chain of boxes $A=A_0,\dots,A_n= B$ such that the top of $A_i$ is glued to the bottom of $A_{i+1}$. We say that $A$ lies \emph{above} $B$ and that $B$ lies \emph{below} $A$. Vertically adjacent boxes and their shadows are dictated by the Markov property (\Cref{def:item:markov} of \Cref{def:pa-flow}), see \Cref{fig:vert-glues}. The shadow of horizontally adjacent boxes $A, B\in\boxes$ is a pair of rectangles with disjoint interiors, as in \Cref{fig:horiz-glues}.
        \begin{figure}[ht]
            \centering
                 \begin{subfigure}[b]{0.45\textwidth}
                    \centering
                    \includegraphics[]{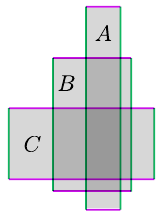}
                    \caption{$A$ lies above $B$ lies above $C$}
                    \label{fig:vert-glues}
                \end{subfigure}
                \begin{subfigure}[b]{0.45\textwidth}
                    \centering
                    \includegraphics[]{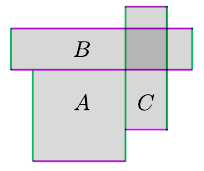}
                    \caption{$A$ is adjacent to $B$ and $C$ while $C$ lies above $B$}
                     \label{fig:horiz-glues}
                \end{subfigure}
        \caption{Shadows of adjacent boxes of $\mathcal{B}$ in $\mathcal{P}$.}
        \end{figure}
        
        An essential property of the gluings is the following.
        \begin{lemma}\label{lem:pers-walls-work}
             Let $\alpha$ be a cusp of $N$. Let $L\in N^s(\alpha)$ be a stable (unstable) cusp leaf that emanates from $\alpha$ and into a box $B\in\mathcal{B}$. Suppose that $p\in L$. Then there exists a box $A\in\mathcal{B}$ such that:
             \begin{itemize}
                 \item the box $A$ lies above (below) $B$
                 \item the leaf $L$ emanates into $A$
                 \item the shadow of $A$ contains that of $p$.
             
             \end{itemize}
        \end{lemma}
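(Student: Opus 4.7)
The plan is to lift the persistent tuple of walls around the drilled singular orbit $\pi(\alpha)$ to an infinite chain of boxes stacked along $\alpha$ on the $L$-side, and then to use the asymptotic stable dynamics at $\alpha$ to place $p$'s forward orbit inside one of them.

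Let $\gamma = \pi(\alpha) \subset M$ be the drilled singular orbit that lifts to $\alpha$, and let $\ell = \pi(L) \subset M^{\circ}$ be the stable prong leaf of $\gamma$ that lifts to $L$. By \Cref{obs:pers-wall}, $\gamma$ is contained in a persistent tuple of walls $(W_j)$ with each $W_{j+1}$ immediately above $W_j$ in the flow direction. Since $\ell$ emanates from a particular side of this tuple, the $M^{\circ}$-boxes adjacent to $\gamma$ on the $\ell$-side form a cyclic sequence $(B'_j)$ in which each $B'_{j+1}$ is glued top-to-bottom above $B'_j$ along (the $\gamma$-adjacent portion of) their common horizontal face. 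Starting from $B$, I would lift this sequence to $\mathcal{B}$ along the vertical gluings to obtain an infinite chain $B = B_0, B_1, B_2, \ldots$ in which each $B_{i+1}$ lies immediately above $B_i$, each $B_i$ is adjacent to the cusp $\alpha$, and $L$ emanates into each $B_i$.

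The main step is to show that $\varphi^{t}(p) \in B_n$ for some $n \geq 0$ and some $t \geq 0$. Because $L$ is a stable cusp leaf at $\alpha$ and $p \in L$, the forward orbit of $p$ is asymptotic to $\alpha$: this follows from the pseudo-hyperbolic local model of \Cref{def:singular-orbit}, which contracts the stable prong leaves onto the central orbit under the flow, and lifts to $N$. In particular, the forward orbit of $p$ stays on the $L$-prong side of $\alpha$ and is eventually confined to the $\alpha$-tube $\bigcup_i B_i$. Since the flow coordinate along $\alpha$ grows without bound, for $t$ sufficiently large we have $\varphi^{t}(p) \in B_n$ with $n \geq 0$.

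Setting $A = B_n$ completes the stable case: $A$ lies above $B$ via the chain $B_0, \ldots, B_n$, the leaf $L$ emanates into $A$ by construction, and since $\varphi^{t}(p) \in A$ the shadow of $A$ contains the shadow of $\varphi^{t}(p)$, which equals the shadow of $p$ as shadows are flow-invariant. The unstable case is symmetric, using backward asymptotics of unstable cusp leaves and the dual partial order. I expect the main obstacle to lie in the first step: lifting the cyclic $M^{\circ}$-sequence $(B'_j)$ to an infinite, correctly-ordered chain in the universal cover, where the Markov property (\Cref{def:item:markov}) is needed to guarantee that successive lifted boxes are genuinely glued top-to-bottom along the walls adjacent to $\alpha$, and that the lift does not stall or loop back on itself.
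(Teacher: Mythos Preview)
Your argument is correct but takes a different route from the paper. The paper's proof is a two-line combinatorial observation: let $\mathcal{A}$ be the collection of boxes above $B$ into which $L$ emanates; walking upward through $\mathcal{A}$, the Markov property forces the shadows to stretch monotonically in the stable direction, so they must eventually cover the shadow of $p$. Your approach is the dynamical dual of this --- rather than watching the box shadows widen out toward $p$, you flow $p$ forward and watch it fall into a box of the chain. The paper's route is shorter and sidesteps the lifting and index-ordering concerns you flag at the end; your route has the virtue of making explicit the stable-contraction mechanism that underlies the Markov stretching, and your worry about $n\geq 0$ resolves cleanly once you note that the orbit visits boxes in strictly increasing partial order while the $B_i$ are totally ordered by $i$.

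Two places to tighten. First, your appeal to the prong model (\Cref{def:singular-orbit}) only gives contraction \emph{near} the marked orbit; to conclude that the forward orbit of an arbitrary $p\in L$ enters the chain you need the global contracting metric of \Cref{rem:box-metric} (equivalently, the no-circular-sinks condition), not just the local picture. Second, your invocation of \Cref{obs:pers-wall} for a persistent tuple is literally correct only when $\gamma$ sits in a wall; when $\gamma$ is a singular marked orbit in the vertical one-skeleton there are several adjacent persistent tuples rather than one, though the cyclic sequence of boxes on the $\ell$-side that you actually use still exists and lifts as you describe.
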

        \begin{proof}
             Let $\mathcal{A}$ be the collection of all boxes $A$ lying above $B$ such that $L$ emanates into $A$. As one walks upward through $\mathcal{A}$, the boxes must eventually stretch in the stable direction because of the Markov property (see \Cref{def:pa-flow}). So, the boxes must extend over $L$. See \Cref{fig:pers-wall}.
            \begin{figure}[ht]
            \centering
                \includegraphics[]{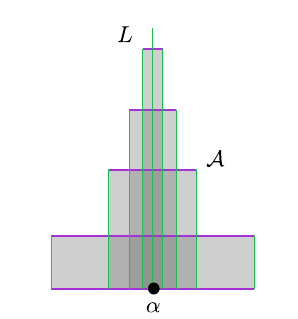}
                \caption{The boxes that meet a cusp leaf as in \Cref{lem:pers-walls-work}.}
                \label{fig:pers-wall}
            \end{figure}
        \end{proof}
        
        We work with $\mathcal{B}$ in practice by working with large finite subsets of boxes. Let $\mathcal{G}_M\subset M$ be the graph dual to the cell structure of the drilled box decomposition $\mathcal{B}_{M}^{\circ}$. The lift of $\mathcal{G}_M$ to $\mathcal{G}\subset N$ is a graph dual to the cell structure on $\mathcal{B}$. We stress that $\mathcal{G}$ includes gluings between the sides of the boxes and contains the underlying graph of $\mathcal{M}(\mathcal{B})$ as a proper subset.
        
        \begin{defn}\label{def:box-ball}
        Choose a point $x\in \mathcal{G}$ that descends into a chosen box $X\in\mathcal{B}$. Use $\mathcal{G}(x,n)\subset \mathcal{G}$ to denote the $n$-ball about $x$. Let $\mathcal{B}(X,n)$ be the collection of boxes in $\mathcal{B}$ that correspond to vertices in $\mathcal{G}(x,n)$. We call $\mathcal{B}(X,n)$ the \emph{$n$-ball of boxes}.
        \end{defn}
        
        The choices of $x$ and $X$ do not matter for the main algorithm, so we suppress them and talk of $\mathcal{G}(n)$ and $\mathcal{B}(n)$.
        
        Since $\mathcal{G}$ is a Cayley graph for $\pi_1(M)$, we appeal to the solution to the word problem for $\pi_1(M^{\circ})$ (\cite[Theorem 4.2]{sela} and Pr\'eaux \cite[Main Theorem]{preaux}) to generate $\mathcal{B}(n)$. Note that $\mathcal{B}(n)$ need not be simply connected.
    
        Since $N^s$ and $N^u$ are orientable, we can orient the foliations within the boxes in $\mathcal{B}(n)$ by choosing, in the initial box $X$, which stable walls are east and west, and which unstable walls are north and south (as per the orientation convention on $\mathcal{P}$ in \Cref{subsec:agol-gueritaud}). Every box in $\mathcal{B}(n)$ then inherits an orientation by induction.
        
\begin{rem}\label{rem:projecting-is-hard}
        Rather than working in the three-manifold $N$, one might be tempted to work directly in $\mathcal{P}$ and with shadows of boxes. Since the shadow of $\mathcal{B}$ is not locally finite, a starting point would be to work with the projection of $\mathcal{B}(n)$ to $\mathcal{P}$. However, there is no obvious way to determine if two boxes of $B, A\in\mathcal{B}(n)$ do not share an orbit. This would be possible with quantitative bounds, see \Cref{qn:quasigeodesic}.
      \end{rem}

\subsection{Finding rectangles}\label{subsec:find_rects}
This is a technical subsection in which we compute, in terms of boxes, representatives of the skeletal rectangles in $\mathcal{P}$.

Let $\mathcal{A}\subset\mathcal{B}$ be a finite connected subcomplex of boxes. To find rectangles in the shadow of $\mathcal{A}$, we use discrete versions of cusps and leaves. These should be thought of the cusps and leaves which $\mathcal{A}$ `can see'.
\begin{defn}\label{def:cuspsnleaves}
    A \emph{cusp} of $\mathcal{A}$ is a pair $(s,\{B_i\})\subset \mathcal{A}$ where $s\in\sing$ is a marked orbit of $\varphi$ and $\{B_i\}\subset\mathcal{A}$ is a maximal connected collection of drilled boxes such that each parent box $P(B_i)\in \mathcal{B}_M$ meets $s$. See \Cref{fig:cusps-in-boxes}. We use $\cusps(\mathcal{A})$ to denote the set of all cusps of $\mathcal{A}$.
\begin{figure}[ht]
     \centering
     \includegraphics[scale=0.25]{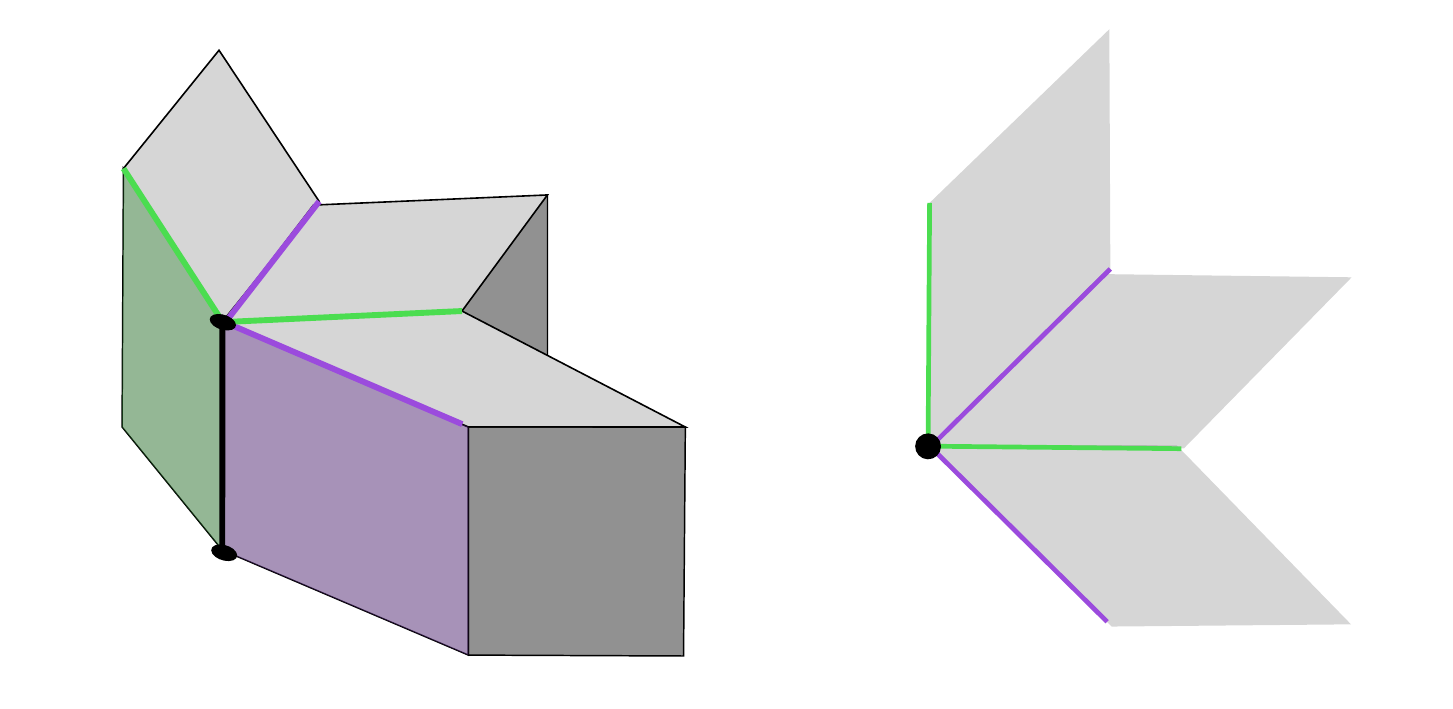}
     \caption{Left: Three boxes which define a common element of $\mathrm{Cusps}(\mathcal{A})$, and several cusp leaves emanating into these boxes. Right: The shadows in $\mathcal{P}$.}
     \label{fig:cusps-in-boxes}
 \end{figure}
 \end{defn}
 
\begin{rem}\label{rem:no_proj_dupes}
    A cusp of $\mathcal{A}$ corresponds to some underlying cusp $\alpha$ of $N$, and we often use $\alpha$ to denote the corresponding cusp of $\mathcal{A}$. However, distinct cusps of $\mathcal{A}$ may correspond to the same cusp of $N$. Verifying otherwise runs into the same difficulty as in \Cref{rem:projecting-is-hard}. However, it is trivial to check whether two corresponding cusps of $N$ are translates: it is when the marked orbit is the same.
\end{rem}
Define a partial order on $\mathcal{A}$ by declaring $B\leq A$ when there is an ascending vertical chain of boxes from $B$ to $A$ contained in $\mathcal{A}$ (this partial order is weaker than that inherited from $\mathcal{B}$). For a collection of boxes $\{B_i\}$ in $\mathcal{A}$, we use $\max_i(B_i)$ to denote the set of maximal boxes in $\{B_i\}$ with respect to this partial order on $\mathcal{A}$. Given a cusp $(\alpha,\{B_i\})$, a stable leaf $L$ meeting this cusp emanates into one or two boxes in $\max_i(B_i)$. For $B\in \max_i(B_i)$, the Markov property ensures that $L$ emanates into each box in a vertical ascending chain up to $B$. We use a single box to determine each cusp leaf by enforcing a tie-breaker.

\begin{defn}\label{def:leaves}
    Fix a cusp $\alpha = (s,\{B_i\})\in \cusps(\mathcal{A})$. Let $L\in N^s(\alpha)$. There is at most one box $B_L\in \mathrm{max}_i(B_i)$ such that $L$ emanates into the interior or into the west wall of $B_L$. If there is such a box, call the pair $(B_L,L)$ a \emph{stable cusp leaf of $\alpha$}. We call $B_L$ the \emph{base box} of the cusp leaf. Similarly, we define \emph{unstable cusp leaves}, now using minimal boxes, and with leaves emanating into the interior or into the south wall of the boxes.
\end{defn}

 Note that a fixed box can be the base box for two distinct stable (unstable) leaves when there is a stable (unstable) leaf emanating into the interior.
 
 We use $\mathcal{A}^s$ and $\mathcal{A}^u$ to denote the sets of all stable and unstable cusp leaves, respectively. Like cusps, we have potential duplicates, that is, distinct pairs $(B_L,L)$, $(B'_L,L)\in \mathcal{A}^s$ for the same underlying leaf $L$.
  
\begin{defn}\label{def:walls}
    Let $(B_L,L)\in \mathcal{A}^s$ be a stable (unstable) cusp leaf. Let $\mathcal{W}(\mathcal{A},(B_L,L))$ be the collection of all boxes in $\mathcal{A}$ that lie below $B_L$ in $\mathcal{A}$ (above $B_L$). We call $\mathcal{W}(\mathcal{A},(B_L,L))$ the \emph{wall of boxes} about $(B_L,L)$ in $\mathcal{A}$.
\end{defn}

\begin{figure}[ht]
    \centering
    \includegraphics[width=0.3\textwidth]{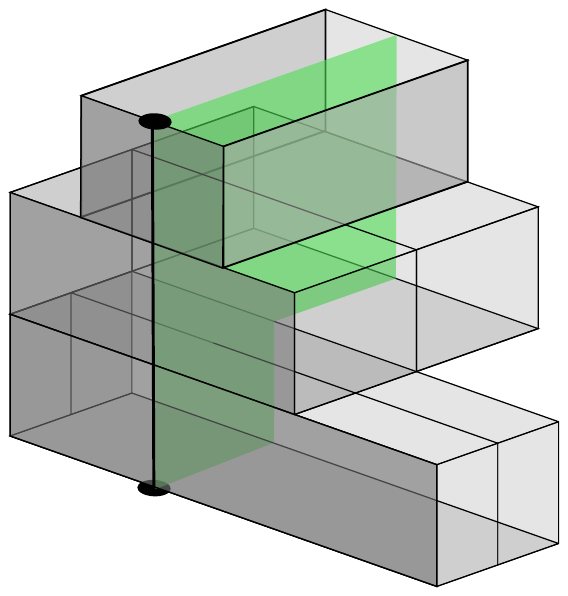}
    \caption{A wall of boxes.}
    \label{fig:wall-of-boxes}
\end{figure}

In $N$, the wall of boxes $\mathcal{W}(\mathcal{A},(B_L,L))$ is a combinatorial neighbourhood of a connected component of where $L$ meets $\mathcal{A}$, see \Cref{fig:wall-of-boxes}.

\begin{defn}
    Let $(L,K)\in N^s\times N^u$ be a pair of cusp leaves. Call $(L,K)$ a \emph{joint} if $\mathcal{W}(\mathcal{A},(B_L,L))\cap \mathcal{W}(\mathcal{A},(B_K,K))\neq\varnothing$ for some $(B_L,L)$,\, $(B_K,K)$. We use $\ints(\mathcal{A})$ to denote the set of joints.
\end{defn}
\begin{lemma}\label{lem:jointswork}
    If $(L,K)\in\ints(\mathcal{A})$, the pair of leaves $L$ and $K$ intersect in $N$.
\end{lemma}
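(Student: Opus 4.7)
The plan is to show that $L$ and $K$ both pass through a common box in $\mathcal{A}$ and then use the product structure of that box to locate a point of intersection. By definition of a joint, there exists a box $B \in \mathcal{W}(\mathcal{A},(B_L,L)) \cap \mathcal{W}(\mathcal{A},(B_K,K))$. Unpacking the partial order, this gives $B_K \leq B \leq B_L$ in $\mathcal{A}$, together with a descending vertical chain $B_L = A_n \geq A_{n-1}\geq\cdots\geq A_0 = B$ and an ascending vertical chain $B_K = C_0 \leq C_1 \leq \cdots \leq C_m = B$, where at each step the top of the lower box is glued to the bottom of the upper box.

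The key claim to establish is that $L$ meets each $A_i$ as a full stable slice $I^s\times\{u_i\}\times I^t$ in $A_i$'s coordinates, and dually that $K$ meets each $C_j$ as a full unstable slice $\{s_j\}\times I^u\times I^t$. I would prove this by induction along the chains. For the base case at $A_n = B_L$, \Cref{def:leaves} gives that $L$ emanates into the interior or into the west wall of $B_L$, so $L\cap B_L$ contains a stable slice of the required form. For the inductive step from $A_{i+1}$ down to $A_i$, the stable slice in $A_{i+1}$ restricts to its bottom as the full horizontal segment $I^s\times\{u_{i+1}\}\times\{0\}$. The Markov property (\Cref{def:item:markov} of \Cref{def:pa-flow}) forces the bottom of $A_{i+1}$ to be tiled by $u$-rectangles of the form $H^s\times I^u\times\{0\}$, one per vertically adjacent box below, and each such rectangle spans the full $u$-interval. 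Hence the horizontal segment meets the $u$-rectangle glued to $A_i$ in a non-trivial sub-segment, which appears on $A_i$'s top as a full stable segment and then flows down through $A_i$ to give the required stable slice. The propagation of $K$ upward along the $C_j$ is dual, using that an unstable slice meets every $s$-rectangle on the top of a box.

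Once both slices sit inside $B$, their intersection $\{(s_0,u_0)\}\times I^t$ in $B$'s coordinates is a non-empty flow segment contained in $L\cap K$, which shows that the leaves intersect in $N$. The main technical content is the propagation step, but it reduces to an elementary observation about the interaction between $u$-rectangles and stable slices dictated by the Markov condition, so I expect no serious obstacle beyond careful bookkeeping of coordinates across gluings.
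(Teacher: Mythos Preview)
Your proposal is correct and follows essentially the same approach as the paper: find a common box $B$ in the two walls, use the Markov property to see that both $L$ and $K$ meet $B$, and conclude that they intersect inside $B$. The paper compresses your inductive propagation argument into the single phrase ``By the Markov property, $L$ and $K$ both meet $B$''; what you have written is an explicit unpacking of that sentence.
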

\begin{proof}
    If $(L,K)\in\ints(\mathcal{A})$, there is $B\in \mathcal{W}(\mathcal{A},(B_L,L))\cap \mathcal{W}(\mathcal{A},(B_K,K))$. By the Markov property, $L$ and $K$ both meet $B$. So, $L$ and $K$ intersect in $B$.
\end{proof}
The boundary of a skeletal rectangle is made up of joints; see \Cref{fig:walls-intersecting}.

\begin{figure}[ht]
    \centering
    \includegraphics{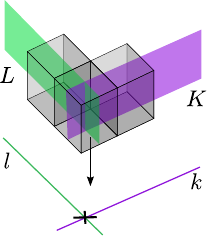}
    \caption{A pair of cusp leaves in $\ints(\mathcal{A})$, projecting to part of the boundary of a skeletal rectangle in $\mathcal{P}$.}
    \label{fig:walls-intersecting}
\end{figure}

\begin{lemma}\label{lem:walls-find-ints}
    Let $\alpha, \beta$ be cusps of $N$. Let $L\in N^s(\alpha)$ and $K \in N^u({\beta})$ be stable and unstable cusp leaves, respectively. Then $L$ and $K$ intersect if and only if for some $n>0$ we have $(L,K)\in\ints(\mathcal{B}(n))$.
\end{lemma}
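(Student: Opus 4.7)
The $\Leftarrow$ direction is immediate from \Cref{lem:jointswork} applied with $\mathcal{A} = \mathcal{B}(n)$. For the $\Rightarrow$ direction, suppose $L$ and $K$ meet at a point $p \in N$; then the flow line $\gamma$ through $p$ lies in $L \cap K$, since both leaves are $\varphi$-invariant. Let $C \in \mathcal{B}$ be the box containing $p$. The overall plan is to take $\mathcal{B}(n)$ large enough that $C$ lies simultaneously in the walls of boxes for $L$ and $K$.

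Apply \Cref{lem:pers-walls-work} to the stable cusp leaf $L$ at $p$: one obtains a box $A_L$, reached by a vertical chain of top-bottom gluings from the initial emanating box of $L$, such that $L$ emanates into $A_L$ and the shadow of $A_L$ contains the shadow of $p$. Hence $A_L$ lies both in the $\alpha$-cusp and on the vertical chain of boxes through which $\gamma$ passes. Reading the proof of \Cref{lem:pers-walls-work}, once a cusp box has stable extent wide enough to cover $\gamma$, every further box in the $\alpha$-cusp (along the direction supplied by the lemma) has this property too, since the Markov gluings grow the stable extent monotonically in that direction. This cofinality lets us choose $A_L$ strictly beyond $C$ on the $\gamma$-chain. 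Symmetrically, produce $A_K$ in the $\beta$-cusp, on the opposite side of $C$ along the $\gamma$-chain, with $K$ emanating into $A_K$.

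Now choose $n$ large enough that $\mathcal{A} := \mathcal{B}(n)$ contains: (i) the finite vertical chain of boxes through which $\gamma$ passes between $A_K$ and $A_L$, which includes $C$; (ii) a vertical chain of $\alpha$-cusp boxes from $A_L$ continuing in the appropriate direction to a box $B_L$ that is maximal in the $\alpha$-cusp component of $\mathcal{A}$ and has $L$ emanating into its interior or west wall, so that $(B_L, L) \in \mathcal{A}^s$; (iii) symmetrically, a vertical chain of $\beta$-cusp boxes from $A_K$ to a $B_K$ with $(B_K, K) \in \mathcal{A}^u$. Only finitely many boxes are required, so such $n$ exists. Concatenating the three chains inside $\mathcal{A}$ produces, within $\mathcal{A}$, a vertical chain joining $B_K$ through $A_K$, $C$, $A_L$ to $B_L$. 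This places $C$ in both $\mathcal{W}(\mathcal{A}, (B_L, L))$ and $\mathcal{W}(\mathcal{A}, (B_K, K))$, so $(L, K) \in \ints(\mathcal{B}(n))$.

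The main technical point is the cofinality observation, which must be teased out of the proof of \Cref{lem:pers-walls-work} rather than stated in it: once a cusp box is wide enough in the stable direction to cover the relevant point, every subsequent cusp box is as well, by the monotone growth of stable extent under the upward Markov gluings. With this in hand, the remainder is an exercise in showing that the finitely many boxes needed for the vertical chains all lie inside $\mathcal{B}(n)$ for $n$ sufficiently large.
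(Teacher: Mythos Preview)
Your proof is correct and follows essentially the same route as the paper: apply \Cref{lem:jointswork} for the $\Leftarrow$ direction, and for $\Rightarrow$ locate the intersection box, invoke \Cref{lem:pers-walls-work} to find boxes above and below into which $L$ and $K$ emanate, and then take $n$ large enough that these all lie in $\mathcal{B}(n)$. The extra care you take with the cofinality observation and with the maximality/tie-breaker condition on the base boxes $B_L,\,B_K$ is detail that the paper's proof leaves implicit.
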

\begin{proof}
The forward direction follows immediately from \Cref{lem:jointswork}. So, suppose that $L$ and $K$ intersect. Then they do so in some box $B\in\mathcal{B}$. By compactness of $M$, there is $n$ such that $B\in\mathcal{B}(n)$. By \Cref{lem:pers-walls-work}, there is $B_L\in\mathcal{B}$ lying above $B$ and  $B_K\in\mathcal{B}$ below $B$ such that $L$ and $K$ emanate into $B_L$ and $B_K$ respectively. This implies that for $n$ sufficiently large, we have $B,B_L,B_K\in\mathcal{B}(n)$. In turn, $B\in \mathcal{W}(\mathcal{B}(n),(B_L,L))\cap \mathcal{W}(\mathcal{B}(n),(B_K,K))$ and $(L,K)\in\ints(\mathcal{B}(n))$.
\end{proof}

Now we build skeletal rectangles. Suppose that $e\subset\mathcal{P}$ is an edge rectangle. Suppose that there are two joints in $\ints(\mathcal{A})$ whose projections to $\mathcal{P}$ bound $e$. We call these joints the \emph{joints of $e$} and say that $e$ \emph{is shaded by} $\mathcal{A}$. We let $\mathrm{Edges}(\mathcal{A})$ be the set of all edge rectangles $e$ that are shaded by $\mathcal{A}$. We define $\mathrm{Faces}(\mathcal{A})$ as the set of face rectangles whose edge subrectangles are shaded by $\mathcal{A}$ and similarly $\mathrm{Tets}(\mathcal{A})$ for tetrahedron rectangles.

\begin{con}\label{con:data}
We build the collections $\edges$, $\faces$, $\tets$ using the combinatorics of $\ints(\mathcal{A})$. Namely:
\begin{itemize}
    \item If two pairs in $\ints(\mathcal{A})$ share an underlying pair of cusps in $\mathcal{A}$, the underlying leaves of the joints bound a unique edge rectangle in $\edges(\mathcal{A})$.
    
    \item If there are three (six) edge rectangles in $\edges(\mathcal{A})$ whose cusps are a subset of three (four) fixed cusps, these are edge subrectangles of a unique face rectangle in $\faces(\mathcal{A})$ (tetrahedron rectangle in $\tets(\mathcal{A})$).
\end{itemize}

For $r$ a skeletal rectangle, let $\mathtt{GetJoints}(r)$ be the function that returns the set of joints in $\ints(\mathcal{A})$ corresponding to $r$ and its subrectangles. With this, we compute the following data, which we require for \Cref{con:ag}:
\begin{itemize}
    \item For $e\subset\edges(\mathcal{A})$, choose $(L,K)\in\ints(e)$. Since each box in $\mathcal{A}$ is oriented, $L$ emanates from either the east or west side of a box. Similarly, $K$ emanates into the north or south side. This pair of orientations determines the veer of $e$.

    \item Let $f\subset \faces(\mathcal{A})$ with cusps $\alpha,\,\beta,\, \delta\in\cusps(\mathcal{A})$. In the following, we use $L^{\alpha}$ to denote a cusp leaf whose underlying cusp is $\alpha$. Then the three edge subrectangles $e_1,\,e_2,\,e_3\subset f$ have the form:
    \begin{itemize}\setlength{\itemindent}{0.5in}
            \item $\mathtt{GetJoints}(e_1) = \{(L^{\alpha}, K^{\beta}) ,(L^{\beta}, K^{\alpha})\}$
            \item $\mathtt{GetJoints}(e_2)= \{(L^{\alpha}, K^{\delta}),( L^{\delta}, K^{\alpha})\}$
            \item $\mathtt{GetJoints}(e_3) = \{(L^{\beta}, K^{\delta}), (I^{\beta}, J^{\delta})\}$
            \end{itemize}
     See \Cref{fig:subdivide}, left. The rectangles $e_1$ and $e_2$ are $\mathrm{Span}^s(f)$ and $\mathrm{Span}^u(f)$, respectively.
    
\begin{figure}[ht]
      \begin{center}
        \includegraphics[]{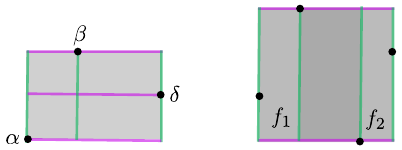}
      \end{center}
      \caption{Example configurations for \Cref{con:data}.}
      \label{fig:subdivide}
\end{figure}

    \item If $t\subset \tets(\mathcal{A})$, then there are two face subrectangles $f_1,f_2\subset t$ for which $\mathrm{Span}^s(f_1) = \mathrm{Span}^s(f_2)$. Then $\mathrm{Span}^s(t) =\mathrm{Span}^s(f_1)$. See \Cref{fig:subdivide}, right. We compute $\mathrm{Span}^u(t)$ similarly. \qedhere
\end{itemize}
\end{con}

\begin{lemma}\label{lem:pre-find-veering-works}
    The marked flow $(\varphi,\sing)$ has no genuine perfect fits if and only if there is $n$ such that $\mathrm{Tets}(\mathcal{B}(n))$ is complete.
\end{lemma}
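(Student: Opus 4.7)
The plan is to derive the biconditional from \Cref{lem:no-fits-iff-complete} (no genuine perfect fits $\iff$ $\mathcal{P}$ admits a complete collection of tetrahedron rectangles), using \Cref{lem:walls-find-ints} and \Cref{con:data} to certify that all data needed to realise such a collection is eventually visible in $\mathcal{B}(n)$.

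For the backward direction, suppose that $\mathrm{Tets}(\mathcal{B}(n))$ is complete for some $n$. By \Cref{con:data}, every element of $\mathrm{Tets}(\mathcal{B}(n))$ is a genuine tetrahedron rectangle in $\mathcal{P}$, so $\mathrm{Tets}(\mathcal{B}(n))$ is itself a finite complete collection of tetrahedron rectangles in the sense of \Cref{def:complete}. Applying \Cref{lem:no-fits-iff-complete} gives that $(\varphi,\sing)$ has no genuine perfect fits.

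For the forward direction, suppose that $(\varphi,\sing)$ has no genuine perfect fits. By \Cref{lem:no-fits-iff-complete} together with the finiteness of tetrahedron rectangles up to the $\pi_1(M^{\circ})$-action (Landry--Minsky--Taylor, as cited in \Cref{subsec:agol-gueritaud}), we may fix a finite complete collection $\mathcal{R}\subset\mathcal{P}$ containing a representative of every $\pi_1(M^{\circ})$-orbit of tetrahedron rectangles. Each $r\in\mathcal{R}$ contains six edge subrectangles, each of whose boundary consists of a pair of joints of intersecting cusp leaves in $N$. By \Cref{lem:walls-find-ints}, every such joint lies in $\ints(\mathcal{B}(n))$ for some $n$, and since the union of joints over $r\in\mathcal{R}$ is finite, a single sufficiently large $n$ detects them all. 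The combinatorial recipe of \Cref{con:data} then places $\mathcal{R}\subset\mathrm{Tets}(\mathcal{B}(n))$.

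It remains to verify that $\mathrm{Tets}(\mathcal{B}(n))$ is itself complete, and this is the main subtlety. Every $t\in\mathrm{Tets}(\mathcal{B}(n))$ is a tetrahedron rectangle in $\mathcal{P}$, hence, by our exhaustive choice of $\mathcal{R}$, a $\pi_1(M^{\circ})$-translate of some element of $\mathcal{R}$. After the removal of translates in \Cref{con:ag}, the orbit representatives of $\mathrm{Tets}(\mathcal{B}(n))$ coincide with those of $\mathcal{R}$, so $\mathcal{V}(\mathrm{Tets}(\mathcal{B}(n))) = \mathcal{V}(\mathcal{R})$, which is a veering triangulation by completeness of $\mathcal{R}$. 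Without the Landry--Minsky--Taylor finiteness, $\mathrm{Tets}(\mathcal{B}(n))$ could in principle acquire tetrahedron rectangles not appearing as translates of any in $\mathcal{R}$, producing unpaired faces and destroying completeness; ruling this out is the only nontrivial content of the forward direction.
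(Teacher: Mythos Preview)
Your proof is correct and follows essentially the same approach as the paper: both directions are deduced from \Cref{lem:no-fits-iff-complete}, with \Cref{lem:walls-find-ints} used to ensure that the joints of a fixed complete collection $\mathcal{R}$ are eventually visible in $\mathcal{B}(n)$. The paper's proof is terser, simply asserting that once $\mathcal{R}\subset\mathrm{Tets}(\mathcal{B}(n))$ the latter is complete; you make explicit the point that this step requires $\mathcal{R}$ to already contain a representative of \emph{every} $\pi_1(M^{\circ})$-orbit of tetrahedron rectangles (via the Landry--Minsky--Taylor finiteness invoked in \Cref{subsec:agol-gueritaud}), so that passing to orbit representatives in \Cref{con:ag} yields $\mathcal{V}(\mathrm{Tets}(\mathcal{B}(n)))=\mathcal{V}(\mathcal{R})$ --- a clarification the paper leaves implicit.
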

\begin{proof}
    In the forward direction, if $\floworbs$ has no genuine perfect fits, there is a complete collection $\mathcal{R}$ by \Cref{lem:no-fits-iff-complete}. By \Cref{lem:walls-find-ints}, any intersection between cusp leaves of $N$ are eventually recorded in the form of joints in $\ints(\mathcal{B}(n))$. So, we can fix $n$ such that $\mathcal{R}\subset \mathrm{Tets}(\mathcal{B}(n))$, and $\mathrm{Tets}(\mathcal{B}(n))$ is complete.
    
    Conversely, suppose that $\floworbs$ has genuine perfect fits. By the converse of \Cref{lem:no-fits-iff-complete}, there is no complete collection in $\mathcal{P}$ at all. In particular, $\mathrm{Tets}(\mathcal{B}(n))$ is never complete.
\end{proof}

\subsection{Identifying translates}\label{subsec:translates}
The remaining data we need to apply \Cref{con:ag} is when the skeletal rectangles found in \Cref{subsec:find_rects} are translates. We compute this by building a canonical representative for a pair of intersecting leaves.

To ease exposition, we assume for this subsection that the $\pi_1(M^{\circ})$-action preserves the orientations on the foliations of $\mathcal{P}$. This ensures that the tie-breaker in the definition of cusp leaves \cref{def:leaves} is preserved under the action. In terms of finding perfect fits, this is inconsequential. This is because we can take a double cover of $M^{\circ}$ to have this property, and having a genuine perfect fit is a property on the universal cover. However, we will also want the precise veering triangulation for \cref{subsec:recog_flows}. We describe how to adapt the approach to the orientation reversing case in \cref{rem:orientation_fix}.

\begin{con}\label{def:cs}
    Let $(L,K)\in\ints(\mathcal{A})$. Thus, $\mathcal{W}(\mathcal{A},(B_L,L))\cap\mathcal{W}(\mathcal{A},(B_K,K))\neq\varnothing$ for some boxes $B_L \,,B_K$. We work in two cases. First, suppose that $L$ and $K$ do not emanate into a common box. Fix any box $B\in\mathcal{W}(\mathcal{A},(B_L,L))\cap\mathcal{W}(\mathcal{A},(B_K,K))$. Walk up from $B$ in $\mathcal{W}(\mathcal{A},(B_L,L))$ until you meet the lowest box $A_L$ such that $L$ emanates into the interior or the west wall of $A_L$. Similarly, walk down from $B$ in $\mathcal{W}(\mathcal{A},(B_K,K))$ until we reach a maximal box $A_K$ (with respect to the partial order) that $K$ emanates into. Let $\cs(L,K)$ denote the resulting chain of boxes. Call $\cs(L,K)$, paired with the data of which faces of $B_L$ and $B_K$ that $L$ and $K$ emanate into, the \emph{cornerstone} of $(L,K)$. See \Cref{fig:cornerstone}, above.
    
    \begin{figure}[ht]
        \centering
        \includegraphics[width=0.7\textwidth]{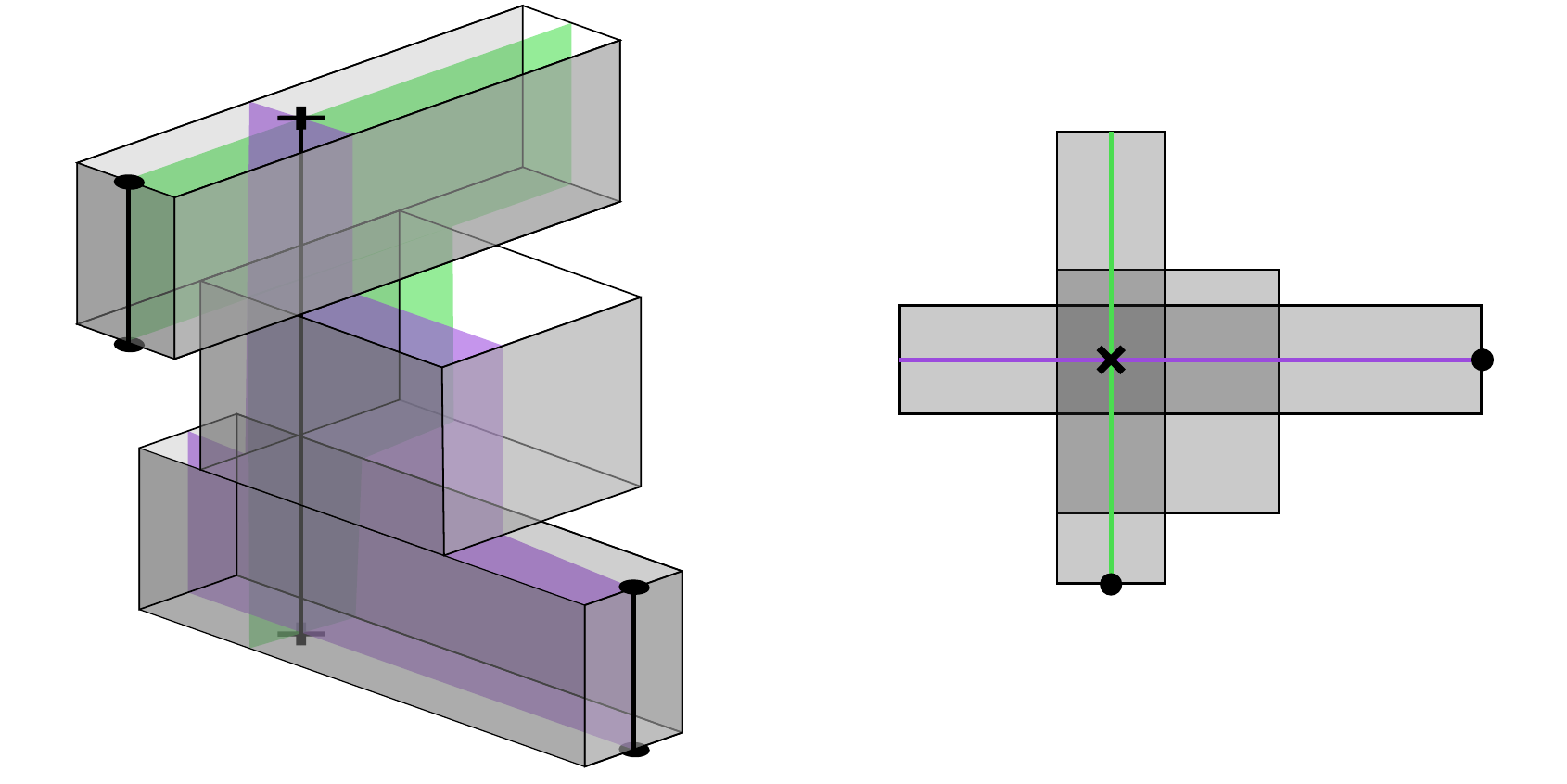}
        \includegraphics[width=0.7\textwidth]{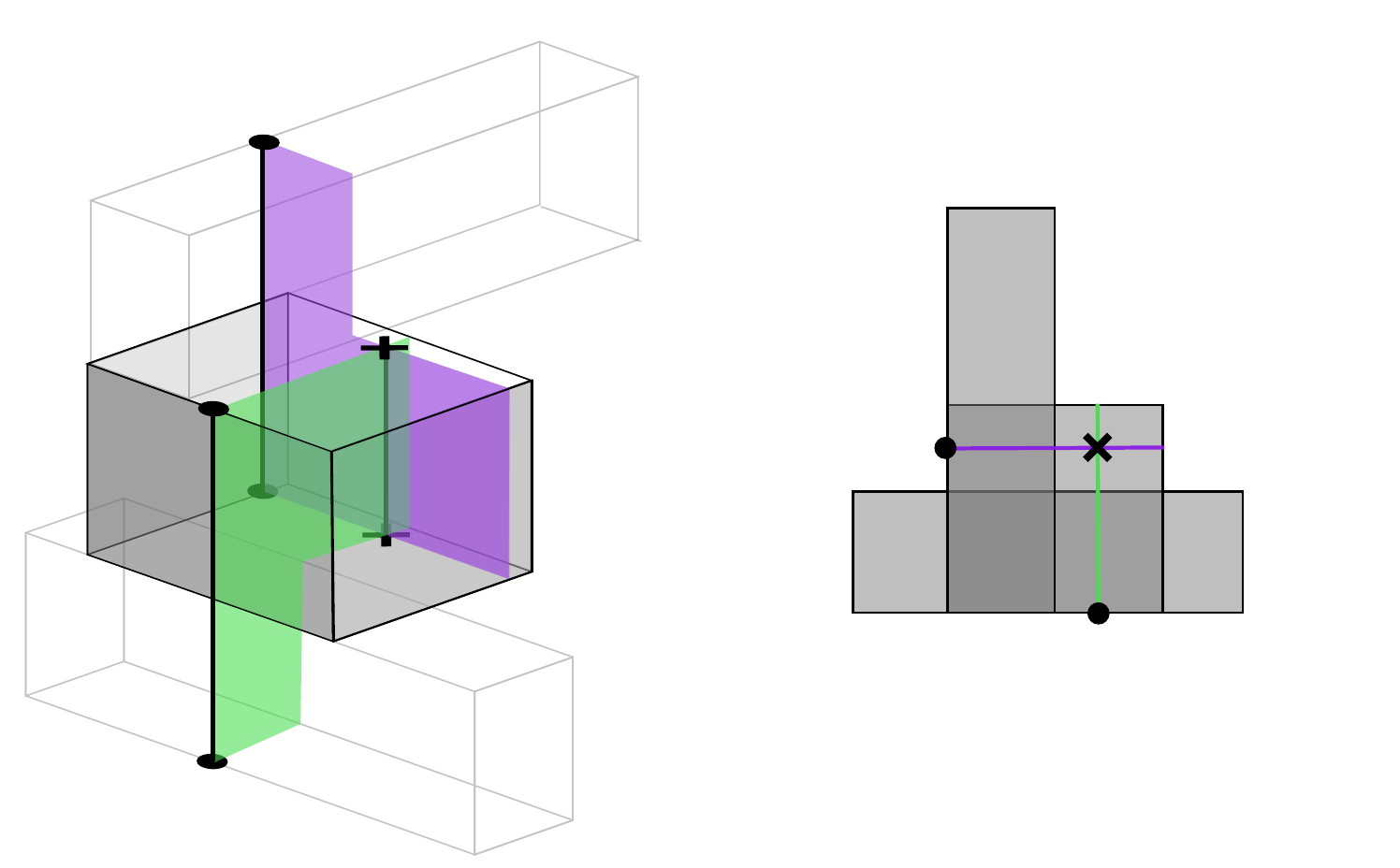}
        \caption{A cornerstone and its shadow in $\mathcal{P}$ in first case (top) and second case (bottom) of \Cref{def:cs}. The unshaded boxes are not included, but show that the chain has finished.}
        \label{fig:cornerstone}
    \end{figure}

    For the other case, suppose that $L$ and $K$ emanate into a common box $B\in\mathcal{W}(\mathcal{A},(B_L,L))\cap \mathcal{W}(\mathcal{A},(B_K,K))$. Now walk upward through the boxes that $K$ emanates into until we reach the maximal box $B_K$ (with respect to the partial order on $\mathcal{A}$) which $K$ emanates into. This will terminate because the boxes must eventually stretch in the corresponding transverse directions due to the Markov property.  We do similarly for $L$, now walking downward, to obtain $B_L$. The resulting chain between $B_L$ and $B_K$ is what we define as $\cs(L,K)$ in this case. See \Cref{fig:cornerstone}, below. We call $\cs(L,K)$, paired with the data of which faces $L$ and $K$ emanate into, the cornerstone of $(L,K)$. Note that $\cs(L,K)$ may not be contained in $\mathcal{A}$, but can be computed once we know that $(L,K)\in\ints(\mathcal{A})$.
\end{con}
A cornerstone is the foundation of where two walls (of boxes) meet. While the construction is for pairs in $\ints(\mathcal{A})$, observe that the cornerstone canonically is canonically associated to $\mathcal{B}$ and a pair $(L,K)\in N^s\times N^u$ of leaves that intersect. Such leaves $L$ and $K$ trace out a path between their underlying cusps in $\mathcal{P}$. In the first case of the definition, the cornerstone is the minimal chain of boxes that projects to cover this path. See \Cref{fig:cornerstone}. The second case is adapted to retain uniqueness. This canonical association allows us to talk freely of cornerstones for pairs of leaves on $N$.

\begin{rem}
Let $e\subset\mathcal{P}$ be an edge rectangle. Observe that $e$ has two cornerstones, one from each of its associated joints. Similarly, cornerstones give a canonical set of boxes for each skeletal rectangle of $\mathcal{P}$.
\end{rem}

We talk of one cornerstone being a \emph{$\pi_1(M^{\circ})$-translate} of another if the chains of boxes are translates in a way that also translates the recording data of which walls the leaf emanate into.

\begin{lemma}\label{lem:cornerstones-char-ints}
    Let $(L,K),(L',K')\in N^s\times N^u$ be pairs of intersecting cusp leaves. Then the pairs $(L,K)$ and $(L',K')$ are $\pi_1(M^{\circ})$-translates if and only if their cornerstones are.
\end{lemma}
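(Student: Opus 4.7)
The plan is to prove both directions by using that \Cref{def:cs} is canonical: the cornerstone $\cs(L,K)$ is determined by $\mathcal{B}$ and the pair $(L,K)$, with no arbitrary choices once we have the leaves and the box decomposition in hand. Since $\pi_1(M^{\circ})$ acts on $N$ preserving $\mathcal{B}$, and (by the standing assumption of the subsection) preserving the orientations on the foliations of $\mathcal{P}$, every ingredient feeding into the construction is equivariant.

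For the forward direction, suppose $g \in \pi_1(M^{\circ})$ satisfies $g\cdot L = L'$ and $g\cdot K = K'$. I would check that $g$ sends the base box $B_L$ of $L$ (as per \Cref{def:leaves}) to the base box $B_{L'}$ of $L'$ and similarly for $K$, using that the tie-breaker in \Cref{def:leaves} (``interior or west wall'' for stable leaves, ``interior or south wall'' for unstable) is orientation-sensitive, and $g$ preserves the orientations. From there, the walls of boxes $\mathcal{W}(\mathcal{A},(B_L,L))$ and $\mathcal{W}(\mathcal{A},(B_K,K))$ map equivariantly under $g$ to their primed counterparts, and the step-by-step walking procedure (upward in one wall, downward in the other, until the emanation conditions are met) commutes with the $g$-action. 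Hence $g\cdot \cs(L,K) = \cs(L',K')$ as chains of boxes, and the emanation data is carried along by $g$.

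For the backward direction, the idea is to recover $L$ and $K$ from the cornerstone. Suppose $g\cdot \cs(L,K) = \cs(L',K')$ as chains of boxes together with the emanation data. In either case of \Cref{def:cs}, the cornerstone specifies a distinguished top box $B_L$, a distinguished cusp of that box (determined by the marked orbit corresponding to $L$), and a distinguished face of $B_L$ through which $L$ emanates. These three data together determine the cusp leaf $L$ uniquely on $N$: it is the unique stable cusp leaf of the specified cusp of $B_L$ that emanates through the specified wall (or interior). Since $g$ takes these data for $(L,K)$ to the corresponding data for $(L',K')$, and the reconstruction is canonical, $g\cdot L = L'$; the same argument applied to the bottom box recovers $g\cdot K = K'$.

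The one step that needs care is the second case of \Cref{def:cs}, where $L$ and $K$ emanate into a common box and the cornerstone is extended upward and downward rather than constructed by walking between two disjoint walls. I would verify separately that in this case the top and bottom boxes, together with the recorded emanation faces, still uniquely determine $L$ and $K$, so that the reconstruction argument applies. This amounts to noting that the Markov property makes the ascending/descending chain of boxes a leaf emanates into strictly monotone in the transverse direction, so the terminating box and the face through which the leaf emanates pin the leaf down uniquely. The main conceptual obstacle is not in either implication but in this bookkeeping: ensuring that the ``emanation data'' I package with $\cs(L,K)$ is rich enough to recover the leaves unambiguously while remaining equivariant under the action.
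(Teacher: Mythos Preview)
Your proposal is correct and follows essentially the same approach as the paper: the forward direction uses that the cornerstone construction is canonical and $\pi_1(M^{\circ})$-equivariant (the paper phrases this as ``by the minimality in the construction''), and the backward direction recovers the leaves from the top and bottom boxes of the chain together with the recorded emanation data. The paper's proof is more terse and does not separate out the second case of \Cref{def:cs}, but your added care there is sound and does not change the argument.
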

\begin{proof}
Suppose that $(L,K) = g(L',K')$ for $g\in\pi_1(M^{\circ})$. Then $\cs(L,K) = g\cdot \cs(L',K')$ by the minimality in the construction of a cornerstone. Moreover, $L$ and $K$ must emanate into the same walls as $gL'$ and $gK'$, respectively. So, their cornerstones are translates.
 
 Conversely, suppose that there is $g\in\pi_1(M^{\circ})$ such that $\cs(L,K) = g\cdot \cs(L',K')$. Then there is a box $B$ in $\cs(L,K)$ such that $L$ emanates into $B$ along and $gL'$ emanates into $B$. Since the cornerstones are translates, we also have that $L$ and $gL'$ emanate into the same wall as $L$. So, $L=gL'$. Similarly, $K=gK'$.
\end{proof}

To apply \Cref{lem:cornerstones-char-ints} we need to compare cornerstones. Note that the projection $P(\cs(L,K))\subset \mathcal{B}_M$ in the original box decomposition is a tuple of boxes. This tuple, and the projections of which walls the cusps emanate from, determine the cornerstone up to translation.

For a face rectangle $f$, we choose a particular cornerstone. There are two cornerstones associated to $\uspan(f)$. Of these two, choose the cornerstone corresponding to the joint $(L,K)$ where $K$ is a leaf for the edge rectangle of $f$ that does not span $f$ in either direction. Call this the \emph{identifying cornerstone for $f$}. 
\begin{lemma}\label{lem:translates-criterion}
Skeletal rectangles are determined up to translation as follows:
    \begin{itemize}
        \item Let $e_1,e_2\subset\mathcal{P}$ be a pair of edge rectangles. Then $e_1$ and $e_2$ are $\pi_1(M^{\circ})$-translates if and only if either one of their cornerstones are $\pi_1(M^{\circ})$-translates.
        
        \item Let $f_1,\,f_2\in \mathcal{P}$ be a pair of face rectangles. Then $f_1$ and $f_2$ are $\pi_1(M^{\circ})$-translates if and only if their identifying cornerstones are translates.
    
    \item Let $r_1,r_2\subset\mathcal{P}$ be a pair tetrahedron rectangles. Then $r_1$ and $r_2$ are $\pi_1(M^{\circ})$-translates if and only if $\sspan(r_1)$ is a $\pi_1(M^{\circ})$-translate of $\sspan(r_2)$ and $\uspan(r_1)$ is a $\pi_1(M^{\circ})$-translate of $\uspan(r_2)$.
    
    \end{itemize}
\end{lemma}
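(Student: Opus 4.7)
The argument for each of the three cases reduces to \Cref{lem:cornerstones-char-ints}, once one verifies that each skeletal rectangle is canonically determined by appropriate data recorded in its cornerstone(s). In every case, the forward direction is immediate: any $g \in \pi_1(M^{\circ})$ carrying $r_1$ to $r_2$ must carry canonical substructures (joints, spanning sub-rectangles, non-spanning edges) to their counterparts, hence the relevant cornerstones too. The work lies in the converse directions, handled case-by-case below.

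For edge rectangles, each joint $(L, K)$ determines its edge rectangle: the cusps carried by $L$ and $K$ are the two diagonal cusps of the rectangle, and an edge rectangle is uniquely determined by its two diagonal cusps together with the four emanating cusp leaves. Therefore \Cref{lem:cornerstones-char-ints} transfers translations of cornerstones directly to translations of edge rectangles.

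For face rectangles, suppose the identifying cornerstones of $f_1$ and $f_2$ are translates via some $g$. By \Cref{lem:cornerstones-char-ints}, the identifying joints satisfy $g \cdot (L^{\alpha'}, K^{\delta'}) = (L^{\alpha}, K^{\delta})$, yielding $g \alpha' = \alpha$, $g \delta' = \delta$, and hence $g \cdot \uspan(f_2) = \uspan(f_1)$ by the edge case. I will then show that a face rectangle is canonically determined by its $\uspan$ together with the side of $\uspan$ on which its third cusp $\beta$ lies: $f$ is obtained by extending $\uspan(f)$ in that stable direction until the first cusp encountered. This side is recorded precisely by the position of the identifying joint as a non-cusp corner of $\uspan(f)$, since the choice of $K$ as a leaf of the non-spanning edge $e_3$ places the joint on the boundary leaf of $\uspan(f)$ shared with $e_3$, i.e., on the side of $\uspan(f)$ containing $\beta$. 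Because the $\pi_1(M^{\circ})$-action preserves orientations (as assumed at the start of \Cref{subsec:translates}), $g$ preserves this side, giving $g f_2 = f_1$.

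For tetrahedron rectangles, the key observation is that $r$ is canonically determined by $\sspan(r)$ alone: one extends $\sspan(r)$ in the unstable direction until hitting a cusp on each side, and this extension is forced---any farther extension would introduce an interior cusp, contradicting that $r$ is a tetrahedron rectangle, while any smaller extension would fail to place a cusp on the corresponding side. Hence $g_s \cdot \sspan(r_2) = \sspan(r_1)$ already forces $g_s \cdot r_2 = r_1$ by applying $g_s$ to this canonical extension, so $r_1$ and $r_2$ are $\pi_1(M^{\circ})$-translates; the hypothesis on $\uspan$ is redundant but included for symmetry and algorithmic convenience. The main obstacle in the whole proof is the face rectangle case, where one must argue that the position of the identifying joint on $\uspan(f)$ genuinely encodes the side along which $f$ extends beyond $\uspan(f)$. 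This is precisely what the careful definition of the identifying cornerstone (choosing $K$ as a leaf of $e_3$) is designed to achieve.
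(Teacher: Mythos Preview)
Your proposal is correct and follows essentially the same approach as the paper: reduce each case to \Cref{lem:cornerstones-char-ints}, then argue that the relevant cornerstone data uniquely determines the skeletal rectangle. Your treatment of the face case is more explicit than the paper's (which simply asserts that ``the definition of the identifying cornerstone distinguishes'' the two extensions of $\uspan(f)$), and your observation that $\sspan(r)$ alone already determines the tetrahedron rectangle---making the $\uspan$ hypothesis redundant---is a valid sharpening the paper does not state.
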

\begin{proof}
We go through the cases. For edge rectangles, note that an edge rectangle is determined by either pair of intersecting leaves in $N^s\times N^u$ that bound it. So, if two edge rectangles share a cornerstone after translation, \cref{lem:cornerstones-char-ints} implies the edge rectangles themselves must be translates.

For face rectangles, suppose that $f_1$ and $f_2$ have identifying cornerstones that are translates. Then $\uspan(f_1)$ and $\uspan(f_2)$ are translates. Since each rectangle $f_i$ is obtained by extending $\uspan(f_i)$ along one of its unstable edges, there are only two choices for each face up to translation. The definition of the identifying cornerstone distinguishes these.

An edge rectangle spans at most one tetrahedron rectangle in each direction. So, a translation of an edge rectangle induces, and is induced by, a translation of the tetrahedron rectangles they span.
\end{proof}

With the criteria of \Cref{lem:cornerstones-char-ints} and \Cref{lem:translates-criterion}, we can identify translates by filtering first through the cusp leaves and cornerstones, then through $\edges(\mathcal{A})$, and finally through $ \faces(\mathcal{A}) $ and $\tets(\mathcal{A})$.

\begin{rem}\label{rem:orientation_fix}
We now adapt this to the setting where the $\pi_1(M^{\circ})$-action reverses the orientations of the invariant foliations. We only need to alter how we deal with leaves that emanate into walls of boxes.

Suppose that $L\in N^s$ emanates into a wall of a box of $\mathcal{A}$. We now define a stable cusp leaf of $\mathcal{A}$ associated to $L$ only when there are two boxes $A_L,B_L\in \max_i(B_i)$ for which $L$ emanates into the east side of $A_L$ and into the west side of $B_L$. Define the cusp leaf now as the tuple $(\{A_L,B_L\},L)$.

For the remaining objects, we carry this forward as follows. We define the wall about this cusp leaf $\mathcal{W}(\{A_L,B_L\},L)$ to now consist of two chains of boxes, one lying below each of $A_L$ and $B_L$. For $K\in N^u$, we call $(L,K)$ a joint when both chains in the wall of $L$ each intersect (if there are two) both chains in the wall about $K$. The skeletal rectangles are defined the same, but with this notion of joint. Then, cornerstones will consist of a pair of chains of boxes, each defined similarly to \cref{def:cs}. We use these cornerstones to identify translates similarly to how we have above.
\end{rem}

\subsection{Finding a veering triangulation}
We now give the other half of the main algorithm. Through \Cref{subsec:find_rects} and \Cref{subsec:translates}, we have computed from $\mathcal{A}$:
\begin{itemize}
    \item the sets of skeletal rectangles shaded by $\mathcal{A}$, as well their subrectangle and veering data from \Cref{con:data},
    
    \item the translate data of the skeletal rectangles shaded by $\mathcal{A}$.
\end{itemize}

So, we can apply \Cref{con:ag} to $\tets(\mathcal{A})$ and build $\mathcal{V}(\tets(\mathcal{A}))$. This gives us the simple $\mathtt{FindVeering}$ routine. We condense together the check of whether tetrahedra with face pairings form a legitimate three-manifold ideal triangulation, and then whether said triangulation is veering.

\begin{algorithm}[ht]
		 \caption{ $\texttt{FindVeering}(\mathcal{A})$}\label{rou:find_veering}
		\begin{algorithmic}[1]
                \If{$\mathcal{V}(\tets(\mathcal{A}))$ is a veering triangulation}
                \State \Return \texttt{True}
                \Else
                \State \Return \texttt{False}
                \EndIf
                
    \end{algorithmic}
\end{algorithm}

In \Cref{alg:has_fits}, we use $\mathtt{FindVeering}$ as a routine by applying it to the ball of boxes with increasing radius.
\begin{prop}\label{lem:find-veering-works}
    The pseudo-Anosov flow with marked orbits $(\varphi,\sing)$ has no genuine perfect fits if and only if there is $n>0$ such that ${\mathtt{FindVeering}}(\mathcal{B}(n))$ returns $\mathtt{True}$.
\end{prop}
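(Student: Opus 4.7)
The plan is to observe that this proposition is essentially immediate from combining \Cref{lem:pre-find-veering-works} with \Cref{def:complete}: the former characterises the absence of genuine perfect fits by the existence of $n$ for which $\tets(\mathcal{B}(n))$ is complete, while the latter says that completeness of $\tets(\mathcal{B}(n))$ is exactly the condition that $\mathcal{V}(\tets(\mathcal{B}(n)))$ is a veering triangulation. Since $\mathtt{FindVeering}(\mathcal{B}(n))$ returns $\mathtt{True}$ precisely when this holds, the two conditions match.

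First I would handle the forward direction. Assuming $(\varphi,\sing)$ has no genuine perfect fits, \Cref{lem:pre-find-veering-works} furnishes an $n$ with $\tets(\mathcal{B}(n))$ complete. Unpacking \Cref{def:complete}, $\mathcal{V}(\tets(\mathcal{B}(n)))$ is a veering triangulation, so $\mathtt{FindVeering}(\mathcal{B}(n))$ returns $\mathtt{True}$ at this $n$. For the converse, if $\mathtt{FindVeering}(\mathcal{B}(n))$ returns $\mathtt{True}$ for some $n$, then by the specification of \Cref{rou:find_veering} the simplicial object $\mathcal{V}(\tets(\mathcal{B}(n)))$ is a veering triangulation; hence $\tets(\mathcal{B}(n))$ is a complete collection of tetrahedron rectangles in $\mathcal{P}$. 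Applying the reverse direction of \Cref{lem:pre-find-veering-works} (which itself invokes \Cref{lem:no-fits-iff-complete}) then yields that $(\varphi,\sing)$ has no genuine perfect fits.

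The only content requiring care is to confirm that the combinatorial $\mathcal{V}(\tets(\mathcal{B}(n)))$ produced by the routine really is the abstract $\mathcal{V}(\mathcal{R})$ of \Cref{con:ag} applied to $\mathcal{R} = \tets(\mathcal{B}(n))$. This is where the work of \Cref{subsec:find_rects} and \Cref{subsec:translates} is used: \Cref{con:data} supplies the veering structure (angles, co-orientations, colours) from the combinatorics of $\ints(\mathcal{B}(n))$, while \Cref{lem:cornerstones-char-ints} and \Cref{lem:translates-criterion} ensure that the face pairings induced by $\pi_1(M^{\circ})$-translation are computed correctly from cornerstones. Provided these identifications are made, no further geometric argument is needed; the proposition reduces to a bookkeeping check that $\mathtt{FindVeering}$ implements the defining condition of completeness. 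The main obstacle has therefore already been absorbed into the preceding lemmas, and the proof is a short two-line invocation of \Cref{lem:pre-find-veering-works} and \Cref{def:complete}.
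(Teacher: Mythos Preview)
Your proposal is correct and matches the paper's approach exactly: the paper's own proof is the single sentence ``This is a direct consequence of \Cref{lem:pre-find-veering-works}.'' Your write-up simply unpacks that sentence, making explicit the role of \Cref{def:complete} and the fact that \Cref{rou:find_veering} faithfully implements \Cref{con:ag} via the data computed in \Cref{subsec:find_rects} and \Cref{subsec:translates}.
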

\begin{proof}
    This is a direct consequence of \Cref{lem:pre-find-veering-works}.
\end{proof}

\begin{rem}\label{rem:filling_slopes}
    Suppose from the veering triangulation $\mathcal{V}$, we wish to recover the filled flow $\varphi$ as in the veering-flow correspondence. Each boundary component of $M^{\circ}$ has a natural \emph{flow meridian}: the slope that bounds a disk in $M$. It is straightforward to realise the flow meridians as loops in the cell structure on $\partial \overline{{M}^{\circ}}$ given by truncating $\mathcal{B}^{\circ}$. We want to realise the meridians embedded in the boundary triangulation induced by truncating $\mathcal{V}$, since this would allow us to specify a filling slope on $\partial \mathcal{V}$ that will recover $\varphi$.
    
    From our construction, for each edge $e$ of $\mathcal{V}$, we have the pair of cornerstones canonically associated to $e$. These cornerstones give the homotopy data of $e$, since it specifies which cusps are at its endpoints in $\mathcal{P}$. We can embed each edge inside (making a choice) one the projection of boxes of one of these cornerstones. We then use which trios of edges bound faces to fill in disks. This gives a homotopy copy of the two-skeleton of $\mathcal{V}$ embedded within $\mathcal{B}^{\circ}$, which tells us how the flow meridians lie with respect to the boundary triangulation induced by $\mathcal{V}$.
\end{rem}

\section{Main algorithms}\label{sec:final-algos}
\subsection{Recognising perfect fits}
Now we give our main algorithm \texttt{HasPerfectFits}. This solves the decision problem of whether a flow has perfect fits. Recall the two constituent routines $\mathtt{FindFit}$ (\Cref{rou:find_fit}) and $\mathtt{FindVeering}$ (\Cref{rou:find_veering}). We apply $\mathtt{FindVeering}$ to a pseudo-Anosov flow $\varphi$ by applying it to the pair $(\varphi,\sing)$ for $\sing$ the set of singular orbits. This will verify if $\varphi$ has no perfect fits as discussed in \Cref{rem:fits_vs_genuinefits}.

\begin{algorithm}[ht]
		 \caption{ $\mathtt{HasPerfectFits}(\mathcal{B})$
   \label{alg:has_fits}}
		\begin{algorithmic}[1]
                \State $n := 0$
                \While{\texttt{True}}
                \If{$\texttt{FindFit}(n,\mathcal{B})=\mathtt{True}$}
                \State \Return \texttt{True}
                \ElsIf{$\texttt{FindVeering}(\mathcal{B}(n)) = \mathtt{True}$}
                \State \Return \texttt{False}
                \EndIf
                \State $n:=n+1$
                \EndWhile
    \end{algorithmic}
\end{algorithm}
\mainthm
\begin{proof}
     By the two characterising results for each subroutine, \Cref{lem:find_fits_works} and \Cref{lem:find-veering-works}, $\mathtt{HasPerfectFits}$ applied to the given box decomposition $\mathcal{B}$ terminates and terminates correctly.
\end{proof}

Now consider the generalised case, where $\varphi$ is a (pseudo-)Anosov flow with marked orbits, and we seek genuine perfect fits (\Cref{def:rel_fits}). The subroutine $\mathtt{FindVeering}$ already applies to this setting, and the $\mathtt{FindFit}$ routine can be adapted to this setting as in \Cref{rem:anosov-case}. So, we have the following generalised statement of the theorem.
\begin{coro}\label{thm:main-ext}
    There is an algorithm that, given $\mathcal{B}$ a box decomposition of a (pseudo-)Anosov flow with marked orbits, decides if the flow has genuine perfect fits.
\end{coro}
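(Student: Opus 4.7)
The plan is to adapt the \texttt{HasPerfectFits} algorithm verbatim to the marked orbit setting, so that \Cref{thm:main-ext} follows from the same interleaving of the two subroutines. Concretely, I would run a loop on $n \geq 0$ that in each iteration calls a variant of $\mathtt{FindFit}$ adapted to $(\varphi,\sing)$, and then $\mathtt{FindVeering}(\mathcal{B}(n))$, returning \texttt{True} or \texttt{False} at the first iteration that one of the routines succeeds. The correctness of this interleaving reduces to two assertions: (a) $(\varphi,\sing)$ has genuine perfect fits if and only if the adapted $\mathtt{FindFit}$ returns \texttt{True} for some $n$, and (b) $(\varphi,\sing)$ has no genuine perfect fits if and only if $\mathtt{FindVeering}(\mathcal{B}(n))$ returns \texttt{True} for some $n$.

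For (a), the routine must be the one described in \Cref{rem:anosov-case}. Given $\mathcal{B}$, I would first compute, for each marked orbit $s_i \in \sing$, the Dehn-Goodman-Fried filling slope that inserts a singular orbit of the required prong type at the drilled cusp, using the boundary cell structure induced by $\mathcal{B}^{\circ}$. This determines $\varphi(s)$ as a pseudo-Anosov flow. By \Cref{lem:pfits-iff-homotopy-general}, $(\varphi,\sing)$ has genuine perfect fits if and only if $\varphi(s)$ has perfect fits in the classical sense. I would then run the original $\mathtt{FindFit}$ logic, scanning pairs of itineraries in $\overline{\mathcal{I}}_n(\mathcal{B})$ together with the finitely many additional ``filled core'' orbits, testing conjugacy in the presentation of $\pi_1(M)$ via the Sela--Pr\'eaux solution to the conjugacy problem. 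The argument of \Cref{lem:find_fits_works} then applies with only cosmetic changes: Fenley's characterisation \Cref{thm:pfits-iff-homotopy} is invoked for $\varphi(s)$, and \Cref{obs:twisted} ensures that the homotopies detected by conjugate itineraries are precisely those predicted by \Cref{rem:precise-homotopies}.

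For (b), no adaptation is required. The routine $\mathtt{FindVeering}$ was set up from the start to take a box decomposition of a (pseudo-)Anosov flow with marked orbits, and the characterisation \Cref{lem:no-fits-iff-complete} (together with \Cref{lem:pre-find-veering-works}) already says that $(\varphi,\sing)$ has no genuine perfect fits exactly when $\mathrm{Tets}(\mathcal{B}(n))$ is complete for some $n$. So \Cref{lem:find-veering-works} applies verbatim.

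The only mildly delicate step is verifying that the Dehn filling computation in (a) actually produces the right prong type and that the relevant boundary slopes are detectable from $\mathcal{B}$; this is routine once one works with the boundary cell structure induced by truncating $\mathcal{B}^{\circ}$, and the hypotheses in \Cref{con:gf} guarantee that the filled flow is pseudo-Anosov whenever $\sing$ contains all the singular orbits. Combining (a) and (b): exactly one of the two routines terminates for sufficiently large $n$, since $(\varphi,\sing)$ either has or does not have genuine perfect fits, so the loop terminates correctly. This gives \Cref{thm:main-ext}.
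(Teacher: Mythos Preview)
Your proposal is correct and follows exactly the paper's approach: the paper's proof of \Cref{thm:main-ext} is simply the observation that $\mathtt{FindVeering}$ already applies in the marked setting (so \Cref{lem:find-veering-works} gives (b)), while $\mathtt{FindFit}$ is adapted via \Cref{rem:anosov-case} (giving (a)), and then the interleaving of \Cref{alg:has_fits} terminates as before. One small slip: in part (a) the conjugacy test should be carried out in $\pi_1$ of the Dehn-filled manifold $M^{\circ}(s)$ carrying $\varphi(s)$, which in the genuinely Anosov or marked case need not coincide with $\pi_1(M)$; this does not affect the argument, since Sela--Pr\'eaux applies to that closed $3$-manifold just as well.
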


\subsection{Recognising flows}\label{subsec:recog_flows}
When $\mathtt{HasPerfectFits}(\mathcal{B})$ returns false, we use the resulting veering triangulation to decide the equivalence problem. Let $\psi$ be another pseudo-Anosov flow on a manifold $N$. We say that $\varphi$ and $\psi$ are \emph{orbit equivalent} if there exists a homeomorphism from $M$ to $N$ carrying orbits of $\varphi$ to orbits of $\psi$. Suppose that we also have a box decomposition $\mathcal{A}$ of $\psi$. If $\mathtt{HasPerfectFits}$ returns false for both $\mathcal{A}$ and $\mathcal{B}$, we have veering triangulations $\mathcal{V}(\varphi)$ and $\mathcal{V}(\psi)$.

Note that if $\varphi$ and $\psi$ are orbit equivalent, this induces an equivariant homeomorphism on the level of their drilled orbit spaces, so $\mathcal{V}(\varphi)$ and $\mathcal{V}(\psi)$ are isomorphic. We allow our veering isomorphisms to reverse the transverse structures; this allows us to consider orbit equivalences which reverse the orientations of orbits on the flow.

 By the remaining promised step of Schleimer-Segerman's program \cite{ss-final-step}, we have the converse: the two flows will be equivalent if and only if both the veering triangulations and their filling slopes agree.  Now, given $\varphi$ and $\psi$, we can first compare their veering triangulations. As finite combinatorial structures, it is straightforward to determine when two veering triangulations are isomorphic. This can even be done in practice using \emph{Veering} \cite{Veering}. Then, we check if the isomorphism between the veering triangulations preserves the filling slopes.

\equiv

We again have an analogue of \Cref{cor:equivalence} in the marked case. We say that two marked pseudo-Anosov flows $(\varphi,\sing)$ and $(\varphi',\{s'_i\})$ are \emph{orbit equivalent} if there is an orbit equivalence of the underlying flows that induces a (possibly permuting) bijection between the sets $\sing$ and $\{s'_i\}$. If two such flows $(\varphi,\sing)$ and $(\varphi',\{s'_i\})$ have no genuine perfect fits, we can determine whether the flows are orbit equivalent.

When one allows the input to be genuine Anosov flows, the same algorithm can also be used to check if two flows are \emph{almost equivalent} through surgery along a set of specified orbits from each flow. See \cite[\S 1]{tsang-friedconjecture} for a discussion of the history and importance of this operation.

\subsection{Recognising suspensions}
We give a final application. For $\varphi$ to be a suspension flow, it must have no perfect fits, so our algorithm will return its veering triangulation $\mathcal{V}(\varphi)$. It is further necessary for $\mathcal{V}(\varphi)$ to be \emph{layered} \cite[Definition 2.16]{ss-link} as per Agol's original construction \cite{agol-veering}. Let $\Sigma\subset M$ be a surface carried by the two-skeleton of $\mathcal{V}$. Suppose that $\Sigma$ is a fiber, that is, $M-\Sigma\simeq \Sigma \times I$. The surface $\Sigma$ induces a (multi)slope on $\partial \mathcal{V}$; call this a \emph{fiber slope} of $\mathcal{V}$. Since $\mathcal{V}(\varphi)$ is layered, it specifies a fibered face of the Thurston norm ball (see \cite[Theorem E]{lmt-polynomial}). All possible fibers $\Sigma$ correspond to irreducible integer classes in the cone over this face.

  Recall from \Cref{rem:filling_slopes} that we have the (multi)slope $\mu({\varphi})$ of the flow meridian(s); this is an element of $H_1(\partial M)\simeq \bbZ^{2n}$ where $n$ is the number of boundary components of $M^{\circ}$. Observe that the flow $\varphi$ is a suspension flow if only if filling back the original meridian disk caps off a disk in some choice of fibering of $M^{\circ}$. That is, precisely when $\mu({\varphi})$ is a fiber slope for some choice of fiber. To determine when this occurs, we must compute the set of fiber slopes. This means computing the fibered face corresponding to $\mathcal{V}(\varphi)$; doing so has been implemented in the \emph{Veering} software \cite{Veering}. They compute the rays that bound the cone in $H_2(M,\partial M)$. The fiber slopes are the image in $H_1(\partial M)\simeq \bbZ^{2n}$ of the linear map $H_2(M,\partial M)\to H_1(\partial M)$. So, this image is also a cone. Then, checking whether $\varphi$ is a suspension amounts to checking if $\mu({\varphi})$ satisfies the corresponding linear inequalities.

\begin{coro}\label{cor:suspension-recognition}
    There is an algorithm to decide, given a box decomposition of a pseudo-Anosov flow, if the flow is a suspension.
\end{coro}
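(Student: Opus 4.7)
The plan is to use \Cref{thm:main} as a black box. First I would call $\mathtt{HasPerfectFits}(\mathcal{B})$. If it returns $\mathtt{True}$, then $\varphi$ has perfect fits and therefore cannot be a suspension (pseudo-Anosov suspensions have no perfect fits by \cite[Theorem G]{fenley-cv-groups}), so I return $\mathtt{False}$. Otherwise, the subroutine $\mathtt{FindVeering}$ has produced the canonical veering triangulation $\mathcal{V}(\varphi)$ of the drilled manifold $M^{\circ}$ along with all the cornerstone bookkeeping that ties its edges back to the boxes of $\mathcal{B}$. The rest of the argument takes place with this finite combinatorial object.

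Next I would test whether $\mathcal{V}(\varphi)$ is layered in the sense of \cite[Definition 2.16]{ss-link}; since this is a local condition on the taut structure, it is straightforward to check. If it fails, $\varphi$ cannot be a suspension (Agol's original construction always yields a layered triangulation \cite{agol-veering}), so I return $\mathtt{False}$. If $\mathcal{V}(\varphi)$ is layered, then by \cite[Theorem E]{lmt-polynomial} it singles out a fibered face $\mathcal{F}$ of the Thurston norm ball of $M^{\circ}$, whose integer points in the open cone $\bbR_{+}\cdot\mathcal{F}\subset H_2(M^{\circ},\partial M^{\circ})$ are exactly the homology classes of fibers compatible with $\mathcal{V}$. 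Applying the boundary map $\partial\colon H_2(M^{\circ},\partial M^{\circ})\to H_1(\partial M^{\circ})\simeq \bbZ^{2n}$, I obtain a rational polyhedral cone of fiber multislopes, which is effectively computable (and in fact implemented in \emph{Veering} \cite{Veering}) by enumerating the extremal rays of $\mathcal{F}$ and pushing them forward.

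The remaining step is to realise the flow meridian multislope $\mu(\varphi)\in H_1(\partial M^{\circ})$ in the same coordinates. This is where \Cref{rem:filling_slopes} is the essential tool: each edge of $\mathcal{V}$ carries cornerstone data identifying it with a chain of boxes in $\mathcal{B}$, so the two-skeleton of $\mathcal{V}$ can be embedded (up to homotopy) inside $\mathcal{B}^{\circ}$. On the boundary, this lets me compare the cell structure cut out by truncating $\mathcal{V}$ with the cell structure cut out by truncating $\mathcal{B}^{\circ}$, in which $\mu(\varphi)$ is visible as the obvious loops bounding the drilled meridian disks. With $\mu(\varphi)$ expressed in the $\mathcal{V}$-boundary coordinates, $\varphi$ is a suspension if and only if $\mu(\varphi)$ satisfies the finite list of linear inequalities cutting out the fiber-slope cone, and this is a routine check.

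The main obstacle is not any single deep step but rather this last translation: keeping $\mu(\varphi)$ and the fiber-slope cone in a common coordinate system on $H_1(\partial M^{\circ})$. The cornerstone data computed in \Cref{subsec:translates} is designed for precisely this, so what remains is careful bookkeeping rather than new mathematics; the termination and correctness of the overall procedure then follows directly from that of \Cref{alg:has_fits}, the layered criterion, and the polyhedral description of fiber slopes.
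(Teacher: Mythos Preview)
Your proposal is correct and follows essentially the same route as the paper: run \texttt{HasPerfectFits}, check layeredness of the resulting veering triangulation, compute the fibered cone via \cite[Theorem~E]{lmt-polynomial} and its image under the boundary map, read off the flow meridian $\mu(\varphi)$ using \Cref{rem:filling_slopes}, and test the resulting linear inequalities. One small caveat: layeredness is not really a \emph{local} condition on the taut structure (it asks for a global carried fiber), but it is still algorithmically decidable, so this does not affect the argument.
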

For a chosen fiber, one can also compute the corresponding monodromy of the flow using the techniques implemented in Bell's \emph{Flipper} software \cite{flipper}.
\section{Questions}\label{sec:qns}
We start with the problem of promoting either of the constituent routines of the $\mathtt{HasPerfectFits}$ algorithm (\Cref{alg:has_fits}) to a complete algorithm.

Consider the \texttt{FindFit} routine (\Cref{rou:find_fit}). Suppose that $\varphi$ has perfect fits. So, as in the characterisation \Cref{thm:pfits-iff-homotopy}, $\varphi$ has a pair of (possibly identical) almost freely homotopic orbits $(\alpha,\beta)$. We define the length of $(\alpha,\beta)$ as the sum of the length of their itineraries. Let $\vert \mathcal{B}\vert$ be the number of edges in $\mathcal{M}(\mathcal{B})$.

\begin{question}\label{ques:homotopies}
     Does there exist a uniform bound in terms of $\vert \mathcal{B}\vert $ on the shortest length of a pair of almost freely homotopic orbits $(\alpha,\beta)$?
\end{question}
The closest to a result of this flavour is given by Tsang in \cite[Theorem 6.4]{tsang-birkhoff}. They give bounds on the length of orbits needed to be drilled to produce a Birkhoff section, but this requires no perfect fits from the outset.

To upgrade the \texttt{FindVeering} routine (\Cref{rou:find_veering}), we need to know when all edge rectangles have been found. One such way is with cornerstones.

\begin{question}\label{ques:frames}
     Suppose that $\varphi$ has no perfect fits. Does there exist a uniform bound, in terms of $\vert \mathcal{B}\vert $, on the size of cornerstones of edge rectangles?
\end{question}
 Note that since cusp leaves are dense in $M$, there are always arbitrarily large cornerstones for individual pairs of leaves as opposed to edge rectangles.
 
Next, we state some natural questions that would suggest alternative approaches to the main problem.
\begin{question}\label{ques:tets}
    Does there exist $C>0$ so that for all pseudo-Anosov flows $\varphi$ without perfect fits we have $\vert \mathcal{V}(\varphi)\vert \leq C \vert \mathcal{B} \vert +C ?$
\end{question}
The number of boxes alone does not control tetrahedra: an Anosov suspension always has a box decomposition with two boxes, while the number of tetrahedra can be unbounded. A converse bound to \Cref{ques:tets} is impossible, since we can subdivide boxes arbitrarily.
% Note that a positive answer to \Cref{ques:tets} tells us how many edge rectangles are possible, but their frames can still be arbitrarily large without an answer to \Cref{ques:frames}.

These questions are closely related to the quasigeodesic properties of the underlying flow. Recall that $\varphi$ is \emph{quasigeodesic} if there is $q>0$ such that the flow lines in the universal cover are $q$-quasigeodesics. For a general reference, see \cite[\S 10]{calegari-folns-book}. Fenley \cite[Main Theorem]{fenley-qg} shows that $\varphi$ is quasigeodesic when it is \emph{bounded}, which includes when $\varphi$ has no perfect fits. Endow $M$ with a metric by giving boxes height one and aspect ratios following \cite[Lemma 3.1.1]{mosher1996laminations}. The metric does not necessarily match across gluings, so we only obtain a path metric; see \cite[page 3440]{agol-tsang} for a thorough description. Following \Cref{rem:projecting-is-hard}, knowing the quasigeodesic constant for $\varphi$ would allow us to project $\mathcal{B}$ to $\mathcal{P}$, and allow for a different approach to the \texttt{FindVeering} routine.
\begin{question}\label{qn:quasigeodesic}
    Suppose that $\varphi$ has no perfect fits. Is there an upper bound on the quasigeodesic constant $q$ in terms of $\vert \mathcal{B}\vert$?
\end{question}

We finish with the natural question that follows \Cref{cor:equivalence}.
\begin{question}
    Is the orbit equivalence problem for transitive pseudo-Anosov flows solvable?
\end{question}
One can ask the same question when considering orbit equivalence for unmarked flows. We have access to powerful tools through Anosov-like actions \cite{bfm-classi} and veering triangulations (say, in the form of \cite[Theorem 6.1]{tsang-friedconjecture}). However, one needs a robust way to compare the nonsingular periodic orbits between flows. Another direction could be the new classifying invariant of \emph{geometric types} introduced in \cite{geometric-types}, where some similar algorithmic problems are considered.
\printbibliography
\end{document}